\documentclass[12pt,A4,reqno]{amsart}
\usepackage{amsfonts}
\usepackage{mathrsfs}

\usepackage{amssymb}
\usepackage{amsmath,amscd}
\usepackage{color}
\usepackage{epsf}
\usepackage{graphicx}
\usepackage{xypic}

\theoremstyle{plain}
\newtheorem{thm}{Theorem}[section]
\newtheorem{lem}[thm]{Lemma}
\newtheorem{prop}[thm]{Proposition}
\newtheorem{cor}[thm]{Corollary}


\setlength{\oddsidemargin}{0.5cm} \setlength{\evensidemargin}{0.5cm}
\setlength{\textheight}{20cm} \setlength{\textwidth}{14.5cm}

\theoremstyle{definition}

\newtheorem{rem}[thm]{Remark}
\newtheorem{exam}{Example}


\newcommand{\Q}{\mathbb Q}

\newcommand{\Z}{\mathbb Z}
\newcommand{\F}{\mathbb F}

\begin{document}

\title [\ ] {Small Cover and Halperin-Carlsson Conjecture}

\author{Li Yu}
\address{Department of Mathematics and IMS, Nanjing University, Nanjing, 210093, P.R.China
  \newline
     \textit{and}
 \newline
    \qquad  Department of Mathematics, Osaka City University, Sugimoto,
     Sumiyoshi-Ku, Osaka, 558-8585, Japan}

 \email{yuli@nju.edu.cn}

\thanks{This work is partially supported by
the Scientific Research Foundation for the Returned Overseas Chinese
Scholars, State Education Ministry and by the Japanese Society for
the Promotion of Sciences (JSPS grant no. P10018).}


\keywords{free torus action, Halperin-Carlsson conjecture, small
cover, moment-angle manifold, glue-back construction}

\subjclass[2000]{57R22, 57S17, 57S10, 55R91}

 \begin{abstract}
   We prove that the Halperin-Carlsson conjecture holds for
    any free $(\Z_2)^m$-action on a compact manifold whose orbit space is a
     small cover. In addition, we show that if the total space of a principal
     $(\Z_2)^m$-bundle over a small cover is connected, it must
     be equivalent to a partial quotient of the corresponding
     real moment-angle manifold with some canonical $\Z_2$-torus action.
 \end{abstract}

\maketitle

  \section{Introduction}

     For any prime $p$, let $\Z_p$ denote the quotient group $\Z\slash
    p\Z$. And let $S^1$ be the circle group.\vskip .2cm

    \textbf{Halperin-Carlsson Conjecture:} If $G =(\Z_p)^m$ ($p$ is
    a prime) or $(S^1)^m$ acts freely on a finite dimensional CW-complex $X$,
    then $\sum^{\infty}_{i=0} \dim_{\Z_p}
    H^i(X, \Z_p)\geq 2^m$ or $\sum^{\infty}_{i=0} \dim_{\Q}
    H^i(X, \Q)\geq 2^m$ respectively.\vskip .2cm

       The above conjecture was proposed in the middle of 1980s by S.~Halperin
       in~\cite{Halperin85} for the torus case, and by G.~Carlsson
       in~\cite{Cal85} for the $\Z_p$-torus case.
       It is also called \textit{toral rank conjecture} in some papers.
       \vskip .2cm

   In the earlier time, this conjecture mainly took the form of
   whether a free $(\Z_p)^m$-action on a product of spheres
   $S^{n_1}\times \cdots \times S^{n_k}$ implies $m\leq k$.
   Many authors have studied this intriguing conjecture
   and contributed results with respect to different aspects (see~\cite{Corner57}--\cite{Hanke09}).
   The
   reader is referred to see a survey of such results in~\cite{Adem04} and ~\cite{Allday93}.
    But the general case is
     still open for any prime $p$.\vskip .2cm

    For general finite dimensional CW-complexes, the conjecture has been proved in~\cite{Puppe09} for
    $m\leq 3$ in the torus and $\Z_2$-torus
    cases and $m\leq 2$ in the odd $\Z_p$-torus case.
       More recently, Cao and L\"u (see~\cite{LuZhi09}) and
     Ustinovsky (see~\cite{Uto09}) independently proved the
     following result, which confirmed the
     Halperin-Carlsson conjecture for some canonical
     $\Z_2$-torus actions on real moment-angle complexes. \vskip .2cm

     \begin{thm}[see~\cite{LuZhi09} and~\cite{Uto09}] \label{thm:CaoLU}
      If $K^{n-1}$ is an $(n-1)$-dimensional simplicial complex
      on the vertex set $[ d\, ]$. Then the real moment-angle complex
       $\mathcal{Z}_{K^{n-1}}$ over $K^{n-1}$ must satisfy:
     $ \sum_{i} \dim_{\Z_2} H^i(\mathcal{Z}_{K^{n-1}}, \Z_2)\geq
     2^{d-n} $. In
     particular, if $P^n$ is an $n$-dimensional simple polytope with $d$ facets,
     then the real moment-angle manifold $\mathcal{Z}_{P^n}$ must satisfy:
     $ \sum_i \dim_{\Z_2} H^i(\mathcal{Z}_{P^n}, \Z_2)\geq
     2^{d-n}$.
     \end{thm}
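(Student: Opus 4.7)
My approach is to reduce the cohomology bound to a purely combinatorial inequality about full subcomplexes of $K^{n-1}$. By the Bahri--Bendersky--Cohen--Gitler stable splitting applied to the polyhedral product $\mathcal{Z}_{K^{n-1}} = (D^1, S^0)^{K^{n-1}}$,
$$\Sigma\, \mathcal{Z}_{K^{n-1}} \;\simeq\; \bigvee_{\emptyset \neq J \subseteq [d]} \Sigma^{|J|+1} |K^{n-1}_J|,$$
where $K^{n-1}_J$ denotes the full subcomplex on the vertex subset $J$. Writing $\widetilde{h}(L) := \sum_i \dim_{\Z_2} \widetilde{H}^i(|L|; \Z_2)$, this gives
$$\sum_i \dim_{\Z_2} H^i(\mathcal{Z}_{K^{n-1}}; \Z_2) \;=\; 1 + \sum_{\emptyset \neq J \subseteq [d]} \widetilde{h}(K^{n-1}_J),$$
and the theorem reduces to the combinatorial inequality
$$\sum_{\emptyset \neq J \subseteq [d]} \widetilde{h}(K^{n-1}_J) \;\geq\; 2^{d-n} - 1. \qquad (\star)$$

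To prove $(\star)$ I would induct on the codimension $c := d - n$. The base $c = 0$ is trivial. For the step, fix a vertex $v \in [d]$ and split the left side of $(\star)$ according to whether $v \in J$. The piece with $v \notin J$ is the corresponding sum for the deletion $K - v$ on $[d] \setminus \{v\}$, still of dimension $\leq n - 1$ and codimension $c - 1$. For $v \in J$, the Mayer--Vietoris decomposition $K^{n-1}_J = (K - v)_{J \setminus \{v\}} \cup \mathrm{star}_K(v)|_J$ with intersection $\mathrm{lk}_K(v)|_{J \setminus \{v\}}$ bounds $\widetilde{h}(K_J)$ in terms of total Betti numbers of full subcomplexes of $K - v$ and $\mathrm{lk}_K(v)$; since $\dim \mathrm{lk}_K(v) \leq n - 2$ the link has strictly smaller codimension, and the two applications of the inductive hypothesis should recover the needed factor of $2$.

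\textbf{Main obstacle.} The chief difficulty is that individual terms $\widetilde{h}(K^{n-1}_J)$ can vanish, so no single subset $J$ produces a multiplicative doubling; the bookkeeping must be global. What is really being asked is essentially the Buchsbaum--Eisenbud--Horrocks rank conjecture for the Stanley--Reisner ring $\Z_2[K^{n-1}]$, whose codimension equals $c = d - n$, and one expects the total rank of a minimal free resolution to be $\geq 2^{c}$. If the geometric Mayer--Vietoris induction runs into trouble, one can instead work algebraically with a Taylor- or Koszul-type resolution of $\Z_2[K^{n-1}]$ over $\Z_2[x_1,\ldots,x_d]$, or use the coordinatewise $(\Z_2)^d$-action on $\mathcal{Z}_{K^{n-1}}$ to decompose cohomology into $2^d$ character eigenspaces (indexed by $J \subseteq [d]$) and bound the number of non-vanishing ones directly from the dimension constraint $\dim K^{n-1} = n - 1$.

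The second (polytope) assertion is then immediate: if $P^n$ is a simple polytope with $d$ facets, its dual $K^{n-1} := (\partial P^n)^*$ is an $(n-1)$-dimensional simplicial sphere on $d$ vertices, and by construction $\mathcal{Z}_{P^n}$ coincides with $\mathcal{Z}_{K^{n-1}}$, so the bound is inherited from the first part.
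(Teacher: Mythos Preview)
The paper does not itself prove Theorem~\ref{thm:CaoLU} in full; the general simplicial-complex statement is simply quoted from~\cite{LuZhi09} and~\cite{Uto09}. What the paper does reprove is the polytope case, as inequality~\eqref{Equ:Inequal} inside the proof of Theorem~\ref{thm:main}, and that argument is entirely different from yours. It runs by induction on the dimension $n$: one realizes $\mathcal{Z}_{P^n}$ as a glue-back $M(V^n,\lambda_0)$ with $\lambda_0$ maximally independent, picks a panel $P_1$, and decomposes $M(V^n,\lambda_0)$ as two copies of a half $Y_1$ glued along $A_1$; the special symmetry~\eqref{Equ:Commute} of small covers makes the gluing map extend over $Y_1$, so Ustinovsky's Lemma~\ref{Lem:Connect} gives $\mathrm{hrk}(M(V^n,\lambda_0),\Z_2)\ge \mathrm{hrk}(A_1,\Z_2)$, and $A_1$ is a principal $(\Z_2)^k$-bundle over a small cover of dimension $n-1$, feeding the induction. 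No Hochster/BBCG splitting or combinatorics of full subcomplexes appears.

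Your reduction via the stable splitting to the combinatorial inequality~$(\star)$ is valid and is essentially the starting point of~\cite{LuZhi09}, but your proposal does not actually establish~$(\star)$. The link/deletion Mayer--Vietoris step you sketch does not close: from the long exact sequence one only gets the upper bound $\widetilde h(K_J)\le \widetilde h\bigl((K-v)_{J\setminus v}\bigr)+\widetilde h\bigl((\mathrm{lk}_K v)_{J\setminus v}\bigr)$, which goes the wrong way for a lower bound, and in any case $\mathrm{lk}_K(v)$ may sit on far fewer than $d-1$ vertices, so the inductive bound it supplies can be much weaker than $2^{c-1}$. You recognize this (``main obstacle''), but the fallbacks you list are not developed either, and the algebraic reformulation you invoke --- the Buchsbaum--Eisenbud--Horrocks inequality for $\Z_2[K]$ --- is precisely the hard content to be proved here, not a shortcut around it. As written, the proposal reduces the theorem to an equivalent combinatorial statement and then leaves that statement open.

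One minor point: the suspension exponent in your BBCG formula is the one for the \emph{complex} moment-angle complex; for $(D^1,S^0)^K$ one has $(S^0)^{\wedge|J|}=S^0$, so the shift is independent of $|J|$. This is harmless for total Betti numbers and does not affect the reduction to~$(\star)$.
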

     \vskip .2cm

    \begin{rem}
       Indeed, much stronger results were obtained in~\cite{LuZhi09}
       and~\cite{Uto09}. For example, it was shown in~\cite{LuZhi09}
       and~\cite{Uto09} that the Theorem~\ref{thm:CaoLU} holds even if
       the $\Z_2$-coefficient is replaced by the rational coefficient.
    \end{rem}
      \vskip .2cm

  \begin{rem}
    There is a purely algebraic analogue of the Halperin-Carlsson conjecture, which is proposed
    in~\cite{Cal85} in the context of commutative algebras. Some related results were
     obtained in~\cite{Cal87}.
     \end{rem}
  \vskip .2cm

  In this paper, we will only study the conjecture for
       $G=(\Z_2)^m$ and $X$ being a closed manifold.
       In addition, we will use the following conventions: \vskip .1cm
       \begin{enumerate}
         \item we always treat $(\Z_2)^m$ as an additive group; \vskip .1cm
        \item any manifold and submanifold in this paper are smooth; \vskip .1cm
         \item we do not distinguish an embedded submanifold and its image. \vskip .2cm
       \end{enumerate}

     Suppose $(\Z_2)^m$ acts freely and smoothly on a closed $n$-manifold
     $M^n$. Let $Q^n =M^n \slash (\Z_2)^m$ be the orbit
     space. Then $Q^n$ is a closed $n$-manifold too.
     Let $\pi : M^n \rightarrow Q^n$ be the orbit map.
     We can think of $M^n$ either as a principal $(\Z_2)^m$-bundle over $Q^n$ or
     as a regular covering over $Q^n$ with deck transformation
     group $(\Z_2)^m$.
     In algebraic topology, we have a standard way to
      recover $M^n$ from $Q^n$ using the universal covering space
      of $Q^n$ and the monodromy of the covering (see~\cite{Hatcher02}).
      However, it is not very easy for us to
        visualize the total space of the covering in this approach.
       In ~\cite{Yu2010}, a new
       way of constructing principal $(\Z_2)^m$-bundles over
       closed manifolds is introduced, which allows us to visualize
       this kind of objects more easily.\vskip .2cm

      Indeed, it is shown in~\cite{Yu2010} that
      $\pi: M^n \rightarrow Q^n$ determines a $(\Z_2)^m$-coloring $\lambda_{\pi}$ on a nice
      manifold with corners $V^n$ (called a \textit{$\Z_2$-core} of
      $Q^n$), and up to equivariant homeomorphism, we can recover $M^n$
     by a standard \textit{glue-back construction}
     from $V^n$ and $\lambda_{\pi}$.
     Using this new language, we will prove the following theorem which
     confirms the Halperin-Carlsson conjecture in some new cases.
     \vskip .2cm

       \begin{thm} \label{thm:main}
           Suppose $(\Z_2)^m$ acts freely on
           a closed $n$-manifold $M^n$ whose orbit space is homeomorphic to a small
           cover, then we must have:
           \begin{equation} \label{Equ:Main}
               \sum_i \dim_{\Z_2} H^i(M^n, \Z_2)\geq 2^m
           \end{equation}
       \end{thm}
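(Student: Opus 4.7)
My plan is to combine Theorem~\ref{thm:CaoLU} with the structural result announced in the abstract (the second main theorem of the paper), which identifies any connected principal $(\Z_2)^m$-bundle over a small cover with a partial quotient of the corresponding real moment-angle manifold. The inequality~\eqref{Equ:Main} will then follow from a routine cohomological comparison along the resulting tower of double covers, after an elementary reduction to the case in which $M^n$ is connected.

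\medskip

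For the reduction: since $Q^n$ is a small cover (hence connected), the free $(\Z_2)^m$-action transitively permutes the connected components of $M^n$. The stabilizer $H$ of a chosen component $M_1$ is a subgroup $(\Z_2)^s\subseteq(\Z_2)^m$ acting freely on $M_1$ with quotient $Q^n$, and $M^n$ consists of $2^{m-s}$ translates of $M_1$. Hence $\sum_i\dim_{\Z_2}H^i(M^n;\Z_2) = 2^{m-s}\sum_i\dim_{\Z_2}H^i(M_1;\Z_2)$, and the connected-case bound $\sum_i\dim_{\Z_2}H^i(M_1;\Z_2)\geq 2^s$ at once gives~\eqref{Equ:Main}. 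I may therefore assume $M^n$ is connected.

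\medskip

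Next, realize $Q^n$ as a small cover over a simple polytope $P^n$ with $d$ facets; its characteristic function determines a rank $d-n$ subgroup $K_\mu\subset(\Z_2)^d$ acting freely on the real moment-angle manifold $\mathcal{Z}_{P^n}$ with $\mathcal{Z}_{P^n}/K_\mu=Q^n$. The structural theorem provides an equivariant homeomorphism $M^n\cong \mathcal{Z}_{P^n}/K'$ for some subgroup $K'\subseteq K_\mu$ of rank $d-n-m$, under which the prescribed $(\Z_2)^m$-action on $M^n$ matches the residual action of $K_\mu/K'$. Pick any flag $0=K'_0\subset K'_1\subset\cdots\subset K'_{d-n-m}=K'$ of rank-increasing $\Z_2$-subgroups and set $X_j := \mathcal{Z}_{P^n}/K'_j$, yielding a tower of free double covers
\[
\mathcal{Z}_{P^n}=X_0 \longrightarrow X_1 \longrightarrow \cdots \longrightarrow X_{d-n-m}=M^n.
\]
For each double cover $X_{j-1}\to X_j$ with classifying class $w_j\in H^1(X_j;\Z_2)$, the Gysin ($S^0$-bundle) long exact sequence
\[
\cdots\to H^{i-1}(X_j;\Z_2)\xrightarrow{\,\cup w_j\,} H^i(X_j;\Z_2)\to H^i(X_{j-1};\Z_2)\to H^i(X_j;\Z_2)\xrightarrow{\,\cup w_j\,} H^{i+1}(X_j;\Z_2)\to\cdots
\]
implies the counting inequality $\sum_i\dim_{\Z_2}H^i(X_{j-1};\Z_2)\leq 2\sum_i\dim_{\Z_2}H^i(X_j;\Z_2)$. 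Iterating $d-n-m$ times and then invoking Theorem~\ref{thm:CaoLU} to get $\sum_i\dim_{\Z_2}H^i(\mathcal{Z}_{P^n};\Z_2)\geq 2^{d-n}$ produces the desired bound $\sum_i\dim_{\Z_2}H^i(M^n;\Z_2)\geq 2^m$.

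\medskip

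The real difficulty lies not in the cohomological counting but in establishing the structural identification $M^n\cong\mathcal{Z}_{P^n}/K'$. This requires translating the $(\Z_2)^m$-coloring $\lambda_\pi$ on the $\Z_2$-core of $Q^n$ produced by the glue-back construction of~\cite{Yu2010} into a subgroup of $(\Z_2)^d$ compatible with the characteristic function of $Q^n$, and verifying that the resulting partial quotient of $\mathcal{Z}_{P^n}$ recovers $M^n$ equivariantly. This structural claim---the second main theorem announced in the abstract---is where I expect the essential technical work to reside; once it is in place, the cohomological bookkeeping above completes the proof almost mechanically.
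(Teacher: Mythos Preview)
Your argument is correct, and the Gysin-sequence step you use is exactly the paper's Lemma~\ref{Lem:Double_Covering}. The route, however, differs from the paper's in two respects worth noting.

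First, the paper proves Theorem~\ref{thm:main} \emph{independently} of Proposition~\ref{prop:Partial-Quotient}: it works directly with the $\Z_2$-core $V^n$ and, via Lemma~\ref{Lem:Max_Indep}, deforms an arbitrary $(\Z_2)^k$-coloring to a maximally independent one through a chain of double covers (this is the same tower you build, but parametrized by colorings rather than by subgroups of $(\Z_2)^d$). Your approach instead front-loads the structural identification $M^n\cong\mathcal{Z}_{P^n}/K'$, which the paper establishes only afterward in Section~\ref{Sec4}. Either ordering is logically sound, since the proof of Proposition~\ref{prop:Partial-Quotient} does not invoke Theorem~\ref{thm:main}.

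Second, your argument uses Theorem~\ref{thm:CaoLU} as an essential black box to bound $\mathrm{hrk}(\mathcal{Z}_{P^n},\Z_2)$. The paper explicitly acknowledges this shortcut (see Remark~\ref{rem:Same}) but chooses instead to give a self-contained proof of the key inequality $\mathrm{hrk}(M(V^n,\lambda_0),\Z_2)\geq 2^k$ by induction on the dimension~$n$, using Ustinovsky's Mayer--Vietoris lemma (Lemma~\ref{Lem:Connect}) together with the symmetry~\eqref{Equ:Commute} of the $\Z_2$-core of a small cover. That inductive argument is the paper's main technical contribution to Theorem~\ref{thm:main}; your route bypasses it at the cost of importing the full strength of Theorem~\ref{thm:CaoLU}. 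In short: your proof is shorter once Proposition~\ref{prop:Partial-Quotient} and Theorem~\ref{thm:CaoLU} are granted, while the paper's proof is more self-contained and displays how the small-cover structure itself drives the induction.
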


        Recall that an $n$-dimensional small cover is a
        closed $n$-manifold with a locally standard $(\Z_2)^n$-action whose orbit space is a
    simple polytope (see~\cite{DaJan91}).\vskip .2cm

    Suppose $P^n$ is a simple polytope with $d$ facets.
    There is a \textit{canonical action} of
       $(\Z_2)^d$ on the real moment-angle complex $\mathcal{Z}_{P^n}$
        whose orbit space is $P^n$. For a subtorus $H \subset (\Z_2)^{d}$, if $H$
       acts freely on $\mathcal{Z}_{P^n}$ through the canonical action,
       $\mathcal{Z}_{P^n} \slash H$ is called a \textit{partial quotient}
       of $\mathcal{Z}_{P^n}$ (see~\cite{BP02}). In addition, if
       there is a subgroup $\widetilde{H}$ of $(\Z_2)^d$ with $\widetilde{H} \supset H$ and
       $\widetilde{H}$ also acts freely on $\mathcal{Z}_{P^n}$ through the canonical action,
       we will get an induced free
       action of $\widetilde{H}\slash H$ on $\mathcal{Z}_{P^n} \slash H$ whose orbit space
       is $\mathcal{Z}_{P^n} \slash \widetilde{H}$. By abusing of terminology,
       we also call this kine of $\widetilde{H}\slash H$-action
        on $\mathcal{Z}_{P^n} \slash H$ \textit{canonical}.
        \vskip .2cm

       In addition,
       two principal $(\Z_2)^m$-bundles $M^n_1$ and $M^n_2$ over a space $Q^n$ are called
    \textit{equivalent} if there is a homeomorphism
   $f: M^n_1 \rightarrow M^n_2$ together with a group automorphism
   $\sigma : (\Z_2)^m \rightarrow (\Z_2)^m$ such that:
   \begin{enumerate}
     \item  $f(g\cdot x) = \sigma(g)\cdot f(x)$ for all $g\in (\Z_2)^m$ and $x \in M^n_1$,
     and \vskip .1cm
     \item  $f$ induces the identity map on the orbit space.
   \end{enumerate}
   Under these conditions, we also say that the \textit{free $(\Z_2)^m$-actions} on $M^n_1$ and $M^n_2$
   \textit{are equivalent}. \vskip .2cm

    We can prove
       the following proposition as a by-product of our discussion.\vskip .2cm

       \begin{prop} \label{prop:Partial-Quotient}
            Suppose $Q^n$ is a small cover over a simple polytope
            $P^n$ and $M^n$ is a principal $(\Z_2)^m$-bundle over $Q^n$. If
             $M^n$ is connected, then $M^n$ must be equivalent to
            a partial quotient $\mathcal{Z}_{P^n} \slash H$
             as principal $(\Z_2)^m$-bundles over $Q^n$.
         \end{prop}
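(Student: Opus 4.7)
\enspace The plan is to realize $M^n$ explicitly as a partial quotient $\mathcal{Z}_{P^n}/H$ by exploiting two facts: (i) the small cover itself is a partial quotient $Q^n=\mathcal{Z}_{P^n}/K$ for a rank-$(d-n)$ subgroup $K\subset(\Z_2)^d$ determined by the characteristic function of $Q^n$, and (ii) $\mathcal{Z}_{P^n}$ is the maximal abelian $\Z_2$-cover of $Q^n$, so every principal $(\Z_2)^m$-bundle over $Q^n$ is pulled back from a bundle classified by a homomorphism $K\to(\Z_2)^m$.

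To carry this out I would first compare the monodromy surjection $\mu:\pi_1(Q^n)\twoheadrightarrow K$ of the regular covering $\mathcal{Z}_{P^n}\to Q^n$ with the natural map $\pi_1(Q^n)\to H_1(Q^n;\Z_2)$. Since $K\cong(\Z_2)^{d-n}$ is elementary abelian of exponent $2$, $\mu$ factors through $H_1(Q^n;\Z_2)$; and the standard Davis--Januszkiewicz computation gives $\dim_{\Z_2}H_1(Q^n;\Z_2)=d-n$, so this factorization is an isomorphism onto $K$. Consequently, the classifying homomorphism $\rho:\pi_1(Q^n)\to(\Z_2)^m$ of the bundle $\pi:M^n\to Q^n$ factors as $\rho=\alpha\circ\mu$ for a unique $\alpha:K\to(\Z_2)^m$, and connectedness of $M^n$ forces $\rho$, hence $\alpha$, to be surjective. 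Setting $H:=\ker\alpha$---a subgroup of $K$ of rank $d-n-m$---I would observe that $H$ acts freely on $\mathcal{Z}_{P^n}$ (as a subgroup of the freely acting $K$), that the partial quotient $\mathcal{Z}_{P^n}/H\to Q^n$ is a principal $K/H$-bundle, and that its monodromy, transported through the isomorphism $\alpha:K/H\xrightarrow{\sim}(\Z_2)^m$, equals $\rho$. Since connected principal $(\Z_2)^m$-bundles over a connected base are determined up to equivariant homeomorphism by their monodromy, this yields the desired equivalence of $M^n$ with $\mathcal{Z}_{P^n}/H$, with the group automorphism $\sigma$ in the paper's definition supplied by $\alpha$.

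Equivalently, and more concretely, one can form the pullback $\widetilde M:=\mathcal{Z}_{P^n}\times_{Q^n}M^n$ with its commuting free $K$- and $(\Z_2)^m$-actions; the factorization of $\rho$ through $\mu$ is precisely the statement that the principal $(\Z_2)^m$-bundle $\widetilde M\to\mathcal{Z}_{P^n}$ is trivializable, in which case the lifted $K$-action under any trivialization must have the form $k\cdot(z,g)=(k\cdot z,\,g+\alpha(k))$ for a single homomorphism $\alpha$, and $\widetilde M/K$ is visibly identified with $\mathcal{Z}_{P^n}/\ker\alpha$.

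The hard step is exactly this factorization of $\rho$ through $\mu$, equivalently the triviality of the pulled-back bundle $\widetilde M\to\mathcal{Z}_{P^n}$. It rests on the identification $K\cong H_1(Q^n;\Z_2)$, which follows from the Davis--Januszkiewicz cohomology calculation for small covers~\cite{DaJan91}; notably one cannot simply appeal to simple connectedness of $\mathcal{Z}_{P^n}$, since already for $P^n$ a product of intervals $\mathcal{Z}_{P^n}$ is an $n$-torus. Once the identification $K\cong H_1(Q^n;\Z_2)$ is in place the remainder of the argument is largely formal bookkeeping.
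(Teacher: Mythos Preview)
Your argument is correct and takes a genuinely different route from the paper's. The paper works entirely within the glue-back formalism: it represents $M^n$ as $M(V^n,\lambda)$, extends $\lambda$ to a maximally independent $(\Z_2)^k$-coloring $\lambda_0$, exhibits $M(V^n,\lambda)$ explicitly as the quotient $M(V^n,\lambda_0)/N^*_\lambda$ by a subgroup of the natural $(\Z_2)^k$-action, and then transports everything to $\mathcal{Z}_{P^n}$ via the explicit equivariant homeomorphism $\widetilde{\Psi}:M(V^n,\lambda_0)\to\mathcal{Z}_{P^n}$ constructed earlier in Section~4 (Corollary~\ref{Cor:Equivalence}). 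This yields a concrete description of the subgroup $H=\sigma(N^*_\lambda)\subset H_\mu\subset(\Z_2)^{k+n}$ in terms of the characteristic function $\mu$ and the coloring $\lambda$.

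Your approach bypasses the glue-back machinery entirely: the observation that the monodromy of $\mathcal{Z}_{P^n}\to Q^n$ realizes the mod-$2$ Hurewicz map (a surjection of $(\Z_2)^{d-n}$ onto $H_1(Q^n;\Z_2)\cong(\Z_2)^{d-n}$, hence an isomorphism) immediately forces every classifying homomorphism $\rho:\pi_1(Q^n)\to(\Z_2)^m$ to factor through $K$, and the rest is covering-space bookkeeping. This is shorter and more conceptual, needing only the Davis--Januszkiewicz Betti-number computation as input. What it does not give you is the explicit generating set for $H$ that the paper's hands-on argument produces; on the other hand, the paper's proof leans on the entire apparatus of Sections~2 and~4, whereas yours is self-contained modulo classical facts.
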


   \vskip .2cm

          The paper is organized as follows.
          In section~\ref{Sec2}, we will briefly review the
          $\Z_2$-core of a manifold and the glue-back construction
          introduced in~\cite{Yu2010} and study some topological aspects of
          the glue-back construction. Then in section~\ref{Sec3},
         we will give a proof of Theorem~\ref{thm:main}. In section~\ref{Sec4}, we will study
          real moment-angle manifolds from the viewpoint of glue-back construction and
         give a proof of Proposition~\ref{prop:Partial-Quotient}. \\

 \section{Glue-back Construction} \label{Sec2}

       Suppose $(\Z_2)^m$ acts freely and smoothly on an $n$-dimensional closed manifold
     $M^n$. Then the orbit space $Q^n = M^n \slash (\Z_2)^m$ is naturally a closed manifold.
     In the rest of this section, we always assume that $Q^n$ is connected and $H^1(Q^n,\Z_2) \neq 0$.
        Indeed, if $Q^n$ is not connected, we can just apply our
      discussion to each connected component of $Q^n$.
      And if $H^1(Q^n,\Z_2)=0$, $M^n$ must
     be homeomorphic to $Q^n\times (\Z_2)^m$.
     \vskip .2cm

    Let $\pi: M^n \rightarrow Q^n$ be the orbit map. If we think of
    $M^n$ as a principal $(\Z_2)^m$-bundle over $Q^n$, it is
    classified by an element $ \Lambda_{\pi} \in H^1(Q^n,(\Z_2)^m)$. To recover
    the $M^n$ from $Q^n$, we shall construct a manifold with corners
    from $Q^n$ that can carry the information of $\Lambda_{\pi}$. This is done
    in the following way (see~\cite{Yu2010}).\vskip .2cm

     By a standard argument of
     the intersection theory in differential topology, we can show that there exists
     a collection of $(n-1)$-dimensional
     compact embedded
     submanifolds $\Sigma_1,\cdots,\Sigma_k$
     in $Q^n$ such that their homology classes $\{ [\Sigma_1],\cdots,[\Sigma_k] \}$
    form a basis of $H_{n-1}(Q^n, \Z_2)\cong H^1(Q^n,\Z_2)\neq 0$.
    Moreover, we can put $\Sigma_1,\cdots,\Sigma_k$
    in \textit{general position} in $Q^n$, which means: \vskip .2cm
      \begin{enumerate}
        \item $\Sigma_1, \cdots, \Sigma_k$ intersect transversely with each
      other, and\vskip .2cm

        \item if $\Sigma_{i_1} \cap \cdots \cap \Sigma_{i_s}$ is not empty,
               then it is an embedded submanifold
               of $Q^n$ with codimension $s$.\vskip .2cm
      \end{enumerate}

      Then we cut $Q^n$ open along $\Sigma_1,\cdots,\Sigma_k$,
       i.e. we choose a small tubular neighborhood $N(\Sigma_i)$ of each $\Sigma_i$
        and remove the interior of each $N(\Sigma_i)$ from $Q^n$.
        Then we get a nice manifold with corners $V^n = Q^n - \bigcup_i
         int(N(\Sigma_i))$, which is called a \textit{$\Z_2$-core} of $Q^n$
        from cutting $Q^n$ open along $\Sigma_1,\cdots,\Sigma_k$
        (see Figure~\ref{p:Panel_Struc} for example).
        Recall that a manifold with corners is called \textit{nice} if
     each codimension $l$ face of the manifold is the intersection of exactly $l$
     facets (see~\cite{Janich68} and ~\cite{Da83}).
         Here, the niceness
        of $V^n$ follows from the general position of $\Sigma_1,\cdots,\Sigma_k$
         in $Q^n$.
        The boundary of $N(\Sigma_i)$ is called the \textit{cut section}
        of $\Sigma_i$ in $Q^n$, and $\{ \Sigma_1,\cdots,\Sigma_k \}$ is
        called a \textit{$\Z_2$-cut system} of $Q^n$.
        Moreover, we can choose each $\Sigma_i$ to be connected.
        \vskip .2cm

         Notice that the projection $\eta_i : \partial N(\Sigma_i) \rightarrow
        \Sigma_i$ is a double cover, either trivial or nontrivial.
        Let $\overline{\tau}_i$ be the generator of
       the deck transformation of $\eta_i$.
       Then $\overline{\tau}_i$ is a free involution on $\partial N(\Sigma_i)$, i.e.
        $\overline{\tau}_i$ is a homeomorphism with no fixed point and $\overline{\tau}^2_i =
        id$. By applying some local deformations to these
        $\overline{\tau}_i$ if necessary (see~\cite{Yu2010}),
        we can construct an \textit{involutive panel structure} on $\partial V^n$,
         which means that
         the boundary of $V^n$ is the union of some compact subsets
         $P_1,\cdots, P_k$ (called \textit{panels}) that satisfy the following three
         conditions:\vskip .2cm

           \begin{itemize}
             \item[(a)] each panel $P_i$ is a disjoint union of facets of $V^n$ and
               each facet is contained in exactly one panel; \vskip .1cm

             \item[(b)] there exists a free involution
                  $\tau_i$ on each $P_i$ which
                  sends a face $f \subset P_i$ to a face $f'\subset P_i$
                 (it is possible that $f'=f$); \vskip .1cm

             \item[(c)] for $\forall\, i \neq j$, $\tau_i (P_i \cap P_j) \subset P_i \cap
             P_j$ and $\tau_i\circ\tau_j = \tau_j\circ \tau_i
                 : P_i \cap P_j \rightarrow P_i \cap P_j$.
            \end{itemize}

         Indeed, the $P_i$ above consists of those facets of $V^n$ that lie in the cut section
         of $\Sigma_i$ and
          $\tau_i: P_i \rightarrow P_i$ is the restriction of the
          modified $\overline{\tau}_i$ to $P_i$ (see~\cite{Yu2010} for
          the details of these constructions). \vskip .3cm

  \begin{rem}
    A more general notion of \textit{involutive panel structure} is
    introduced in~\cite{Yu2010} where the involution $\tau_i$ in (b) above
    is not required to be free.
    This general notion is used in~\cite{Yu2010} to unify the
    construction of all locally standard $(\Z_2)^m$-actions on closed
    manifolds from the orbit spaces.
  \end{rem}
  \vskip .2cm
          \begin{figure}
            \includegraphics[width=0.38\textwidth]{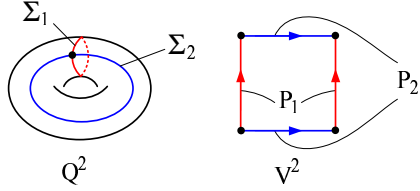}
               \caption{A $\Z_2$-core of torus}
               \label{p:Panel_Struc}
            \end{figure}

       Let $\mathcal{P}(V^n) = \{ P_1,\cdots, P_k \}$
       denote the set of all panels in $V^n$.
         Any map $\lambda: \mathcal{P}(V^n) \rightarrow
          (\Z_2)^m$ is called a \textit{$(\Z_2)^m$-coloring on
          $V^n$}, and any element in $(\Z_2)^m$ is called a \textit{color}.
          \\

      Now, let us see how to recover the principal $(\Z_2)^m$-bundle
       $\pi: M^n \rightarrow Q^n$ from the $\Z_2$-core $V^n$ of
       $Q^n$ and the element
            \begin{equation} \label{Equ:Classify-Bundle}
          \Lambda_{\pi} \in H^1(Q^n,(\Z_2)^m) \cong
          \mathrm{Hom}(H_1(Q^n,\Z_2),(\Z_2)^m).
            \end{equation}
       By the Poincar\'e duality for closed manifolds, there is a group isomorphism
       $$\kappa : H_{n-1}(Q^n, \Z_2) \rightarrow
       H_1(Q^n,\Z_2).$$
        So we can assign an element of $(\Z_2)^m$ to each panel $P_i$ of $V^n$ by:
         \[ \lambda_{\pi}(P_i) =  \Lambda_{\pi}(\kappa([\Sigma_i])) \in (\Z_2)^m \]
         We call $\lambda_{\pi}$ the \textit{associated $(\Z_2)^m$-coloring} of
         $\pi: M^n \rightarrow Q^n$ on $V^n$.
        \vskip .2cm

          Generally, for any $(\Z_2)^m$-coloring $\lambda$
          on $V^n$, we can glue $2^m$ copy of $V^n$ by:
         \begin{equation} \label{Glue_Back}
            M(V^n, \{ P_i, \tau_i \},\lambda) := V^n \times (\Z_2)^m \slash \sim
         \end{equation}
           Where $(x,g)\sim (x',g') $ whenever
            $x' = \tau_i(x)$ for some $P_i$ and
              $g' = g+ \lambda(P_i) \in (\Z_2)^m$. \vskip .2cm

          Note that if $x$ is in the relative interior of $P_{i_1} \cap \cdots \cap
             P_{i_s}$,
           $(x,g) \sim (x',g')$ if and only if
           $ (x',g')= ( \tau^{\varepsilon_s}_{i_s}\circ \cdots \circ
           \tau^{\varepsilon_1}_{i_1}(x), g + \varepsilon_1\lambda(P_1) + \cdots +
           \varepsilon_s\lambda(P_s))$
           where $\varepsilon_j= 0$ or $1$ for any $1\leq  j \leq s$ and
           $\tau^{0}_{i_j} := id$. \vskip .2cm

           $M(V^n,\{ P_i, \tau_i \},\lambda)$ is called the
          \textit{glue-back construction} from $(V^n,\lambda)$.
            Also, we use $M(V^n,\lambda)$
            to denote $M(V^n,\{ P_i, \tau_i \},\lambda)$
            if there is no ambivalence about the involutive panel structure on $V^n$
            in the context.\vskip .2cm

           In addition, let $[(x,g)]\in M(V^n,\lambda)$ denote the equivalence class
           of $(x,g)$ defined in~\eqref{Glue_Back}.
           It is shown in~\cite{Yu2010} that $M(V^n,\lambda)$
           is a closed manifold
           with a smooth free $(\Z_2)^m$-action defined by:
            \begin{equation} \label{Equ:FreeAction}
              g\cdot [(x,g_0)] := [(x, g+g_0)],\; \forall\, x\in V^n, \
             \forall\, g, g_0\in (\Z_2)^m.
           \end{equation}
          And the orbit space
            of $M(V^n,\lambda)$ under
          this free $(\Z_2)^m$-action is homeomorphic to $Q^n$.
          We say~\eqref{Equ:FreeAction} defines the \textit{natural $(\Z_2)^m$-action}
          on $M(V^n,\lambda)$.
          In this paper, we always associate this natural free
          $(\Z_2)^m$-action to $M(V^n,\lambda)$. Moreover, for any
          subgroup $N \subset (\Z_2)^m$, the induced
          action of $(\Z_2)^m\slash N$ on $M(V^n,\lambda)\slash N$
          from the natural action
          is also free and its orbit space is homeomorphic to $M(V^n,\lambda) \slash (\Z_2)^m =
          Q^n$. By abusing of terminology, we also call this
          $(\Z_2)^m\slash N$-action on $M(V^n,\lambda)\slash N$ \textit{natural}.
           \vskip .2cm

        \begin{thm}[Yu~\cite{Yu2010}] \label{Thm:Equiv-Isom}
          For any principal $(\Z_2)^m$-bundle $\pi : M^n \rightarrow
          Q^n$, let $\lambda_{\pi}$ be the associated $(\Z_2)^m$-coloring on $V^n$.
          Then $M(V^n, \lambda_{\pi})$ and $M^n$ are equivalent as principal
          $(\Z_2)^m$-bundles over $Q^n$.
        \end{thm}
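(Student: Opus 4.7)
The plan is to identify the classifying classes in $H^1(Q^n,(\Z_2)^m)$ of the two principal $(\Z_2)^m$-bundles $M^n \to Q^n$ and $M(V^n,\lambda_\pi)\to Q^n$. Via the natural isomorphism $H^1(Q^n,(\Z_2)^m)\cong\mathrm{Hom}(H_1(Q^n,\Z_2),(\Z_2)^m)$ in~\eqref{Equ:Classify-Bundle}, this reduces to comparing the monodromy representations of the two bundles on $H_1(Q^n,\Z_2)$; for $M^n$ this representation is $\Lambda_\pi$ by definition, so it suffices to show that the monodromy of $M(V^n,\lambda_\pi)$ is also $\Lambda_\pi$. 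Once this equality is established, standard covering space theory yields a bundle equivalence covering the identity on $Q^n$, with trivial group automorphism $\sigma=\mathrm{id}$.

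To compute the monodromy of $M(V^n,\lambda_\pi)$, I would take an arbitrary loop $\gamma$ in $Q^n$ and, using transversality, isotope it so that it meets each $\Sigma_i$ transversely in finitely many points and avoids every deeper stratum $\Sigma_{i_1}\cap\cdots\cap\Sigma_{i_s}$ with $s\geq 2$. Lifting $\gamma$ to $V^n\times(\Z_2)^m$, each transverse crossing of $\Sigma_i$ corresponds in the glue-back picture to passing from a boundary point on a facet of $P_i$ to its $\tau_i$-image; by~\eqref{Glue_Back} this forces the $(\Z_2)^m$-label to change by $\lambda_\pi(P_i)$. Summing the contributions one obtains
\[
\mu_\gamma \;=\; \sum_{i}(\gamma\cdot\Sigma_i)\,\lambda_\pi(P_i)\in(\Z_2)^m,
\]
where $\gamma\cdot\Sigma_i$ denotes the mod-$2$ intersection number.

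The next step is to match $\mu_\gamma$ with $\Lambda_\pi([\gamma])$. Since $\{[\Sigma_1],\dots,[\Sigma_k]\}$ is a basis of $H_{n-1}(Q^n,\Z_2)$ and the Poincar\'e duality isomorphism $\kappa$ is adjoint to the intersection pairing, $\{\kappa([\Sigma_i])\}$ is the basis of $H_1(Q^n,\Z_2)$ dual to $\{[\Sigma_i]\}$ under this pairing, i.e.\ $\kappa([\Sigma_i])\bullet[\Sigma_j]=\delta_{ij}$. Hence $[\gamma]=\sum_i(\gamma\cdot\Sigma_i)\,\kappa([\Sigma_i])$ in $H_1(Q^n,\Z_2)$, and applying $\Lambda_\pi$ together with the defining identity $\lambda_\pi(P_i)=\Lambda_\pi(\kappa([\Sigma_i]))$ gives
\[
\Lambda_\pi([\gamma])\;=\;\sum_i(\gamma\cdot\Sigma_i)\,\Lambda_\pi(\kappa([\Sigma_i]))\;=\;\sum_i(\gamma\cdot\Sigma_i)\,\lambda_\pi(P_i)\;=\;\mu_\gamma.
\]
Thus both bundles share the same classifying class and are therefore equivalent as principal $(\Z_2)^m$-bundles over $Q^n$.

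The main obstacle I anticipate is the careful local bookkeeping around the corners of $V^n$, where several $\Sigma_i$'s meet: one must verify that the relation $\sim$ in~\eqref{Glue_Back} is genuinely an equivalence relation on the deeper strata (which ultimately relies on the commuting condition (c) of the involutive panel structure), and that a transverse crossing of $\Sigma_i$ away from deeper strata produces exactly one identification by $\lambda_\pi(P_i)$, independently of which facet inside the possibly disconnected panel $P_i$ is crossed. Once this consistency is established, the monodromy formula and the equality $\mu_\gamma=\Lambda_\pi([\gamma])$ follow from unwinding the definitions, and the equivalence of the two bundles is immediate.
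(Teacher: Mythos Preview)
The paper does not actually prove this theorem: it is quoted verbatim from~\cite{Yu2010} and used as a black box throughout. So there is no proof in the present paper to compare against.

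That said, your monodromy argument is the natural and correct route, and almost certainly the one carried out in~\cite{Yu2010}. The computation of the holonomy of the glue-back bundle $M(V^n,\lambda_\pi)\to Q^n$ as $\sum_i(\gamma\cdot\Sigma_i)\,\lambda_\pi(P_i)$ is exactly right: a path in $Q^n$ lifts to a concatenation of arcs in copies of $V^n$, and each transverse crossing of $\Sigma_i$ forces, by~\eqref{Glue_Back}, a shift of the $(\Z_2)^m$-label by $\lambda_\pi(P_i)$. Your use of the intersection pairing to identify this with $\Lambda_\pi([\gamma])$ is also correct, with one caveat worth making explicit: the paper introduces $\kappa:H_{n-1}(Q^n,\Z_2)\to H_1(Q^n,\Z_2)$ only as ``a group isomorphism'' coming from Poincar\'e duality, without pinning down the normalization. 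Your argument requires precisely that $\{\kappa([\Sigma_i])\}$ be the basis of $H_1$ dual to $\{[\Sigma_i]\}$ under the intersection form; this is the right (indeed, the only) choice that makes the definition $\lambda_\pi(P_i)=\Lambda_\pi(\kappa([\Sigma_i]))$ compatible with Theorem~\ref{Thm:Equiv-Isom}, and it is presumably how $\kappa$ is specified in~\cite{Yu2010}. You should state this explicitly rather than leaving it implicit in the phrase ``adjoint to the intersection pairing.'' The corner bookkeeping you flag is a genuine but routine check, handled by condition~(c) of the involutive panel structure together with the general-position assumption, which lets you keep $\gamma$ away from strata of codimension $\geq 2$.
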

          \vskip .2cm

       \begin{exam} \label{Exam:Covering}
           Figure~\ref{p:Torus-Cover} shows two principal
         $(\Z_2)^2$-bundles over $T^2$ via glue-back constructions
        from two different $(\Z_2)^2$-colorings on a $\Z_2$-core of
        $T^2$. The $\{ e_1 ,e_2 \}$ in the picture is a
        linear basis of $(\Z_2)^2$. The first $(\Z_2)^2$-coloring gives a torus, and the
        second one gives a disjoint union of two tori. In addition,
       there is a double covering
        $\eta$ (defined in~\eqref{Equ:Involution} later)
        from the torus on the top to either one of the torus below it.
     \vskip .2cm

   \begin{figure}
        \begin{equation*}
        \vcenter{
            \hbox{
                  \mbox{$\includegraphics[width=0.5\textwidth]{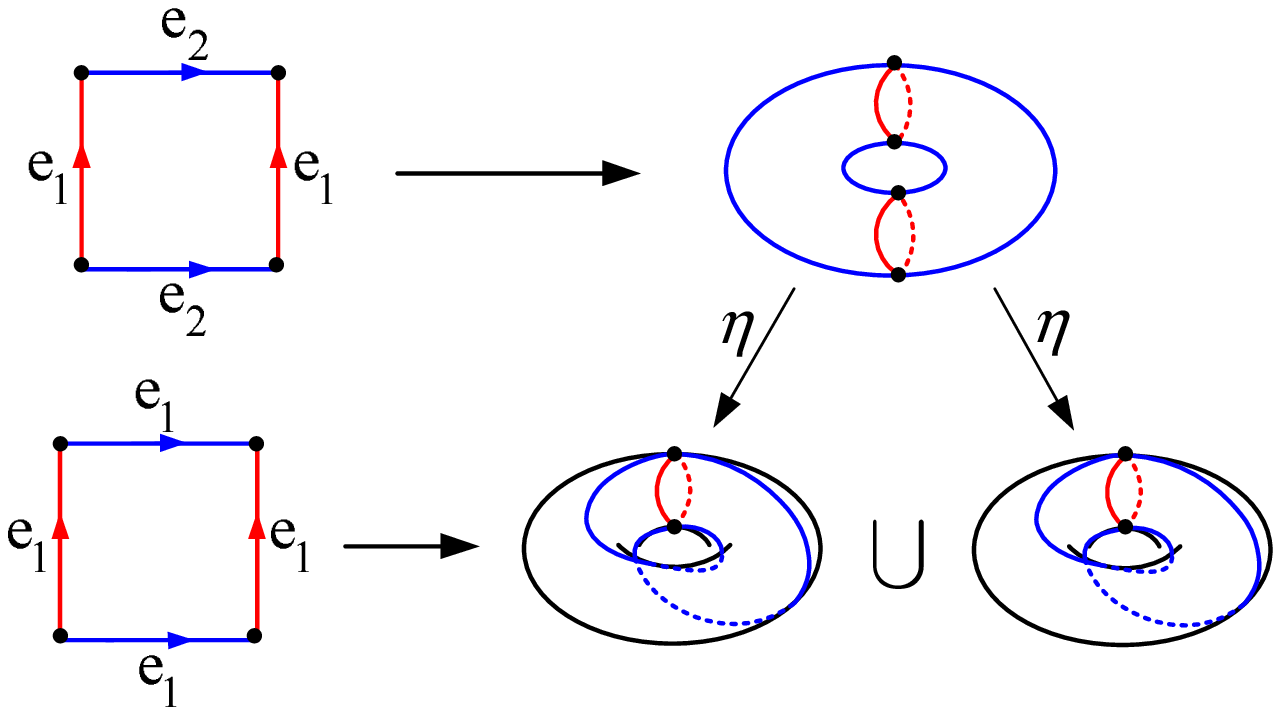}$}
                 }
           }
     \end{equation*}
   \caption{
       } \label{p:Torus-Cover}
   \end{figure}

          \end{exam}

         \begin{exam}
            Figure~\ref{p:Klein-Bottle} shows a $\Z_2$-core of the Klein bottle
            with three different $\Z_2$-colorings, where $\Z_2= \langle a \rangle$.
            So from the glue-back construction, we get three
            inequivalent double coverings of the Klein bottle.
              From left to right in Figure~\ref{p:Klein-Bottle},
             the first
            $\Z_2$-coloring gives a torus,
            while the second and the third both give the Klein bottle. \vskip .2cm

   \begin{figure}
        \begin{equation*}
        \vcenter{
            \hbox{
                  \mbox{$\includegraphics[width=0.58\textwidth]{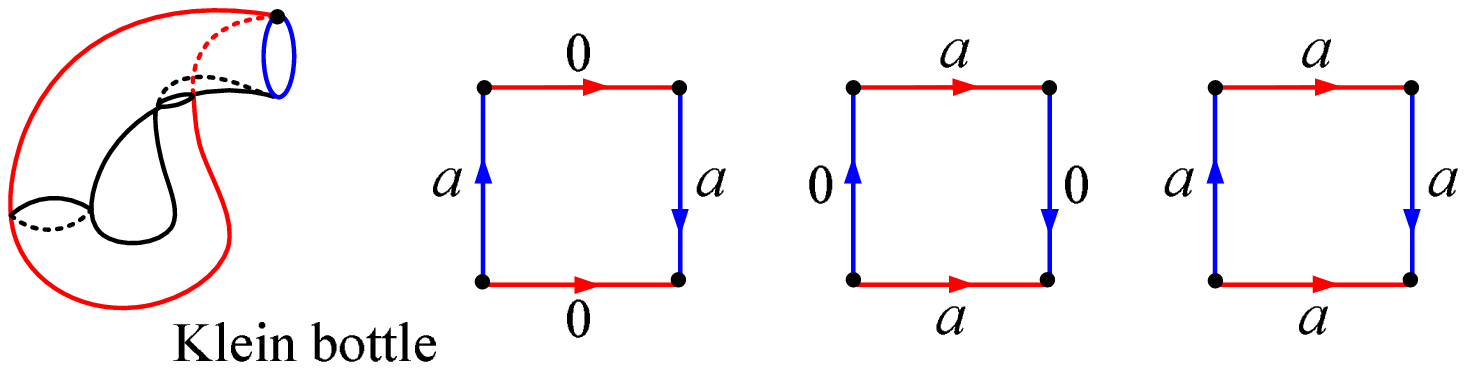}$}
                 }
           }
     \end{equation*}
   \caption{
       } \label{p:Klein-Bottle}
   \end{figure}

         \end{exam}

          For any integer $m\geq 1$, we define
      \begin{align*}
       \quad  \mathrm{Col}_m(V^n) & := \text{the set of all $(\Z_2)^m$-colorings
       on $V^n$} \\
         &\ = \{ \lambda\ | \ \lambda: \mathcal{P}(V^n) \rightarrow (\Z_2)^m
         \}, \\
         L_{\lambda} &:= \text{the subgroup of $(\Z_2)^m$ generated
         by $ \{ \lambda(P_1), \cdots ,\lambda(P_k) \} $},\\
          \text{rank}(\lambda)  & :=  \mathrm{dim}_{\Z_2}
          L_{\lambda}, \ \ \forall\, \lambda \in \mathrm{Col}_m(V^n)
          \qquad\qquad
       \end{align*}

      \vskip .2cm

  \begin{thm}[Yu~\cite{Yu2010}] \label{thm:comp}
        For any $(\Z_2)^m$-coloring $\lambda$ on the panels of $V^n$,
       $M(V^n,\lambda)$ has $2^{m-\mathrm{rank}(\lambda)}$ connected
       components which are pairwise homeomorphic. Let $\theta_{\lambda}:
       V^n\times (\Z_2)^m \rightarrow M(V^n,\lambda)$ be the
       quotient map. Then each
       connected component of $M(V^n,\lambda)$ is homeomorphic to
       $\theta_{\lambda}(V^n\times L_{\lambda})$, and
       there is a free action of $L_{\lambda} \cong (\Z_2)^{\mathrm{rank}(\lambda)}$ on
       each connected component of $M(V^n,\lambda)$ whose orbit space is $Q^n$.
   \end{thm}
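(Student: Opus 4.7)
The plan is to read off the connected components of $M(V^n,\lambda)$ directly from the identification relation~\eqref{Glue_Back}, and then transport between them using the natural free $(\Z_2)^m$-action of~\eqref{Equ:FreeAction}.

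\textbf{Step 1 (partition into clopen pieces).} For each coset $\xi + L_\lambda \in (\Z_2)^m/L_\lambda$, the subset $V^n \times (\xi + L_\lambda) \subset V^n \times (\Z_2)^m$ is saturated under $\sim$, because every elementary identification $(x,g)\sim(\tau_i(x), g+\lambda(P_i))$ shifts the second coordinate by an element of $L_\lambda$. Hence $\theta_\lambda(V^n \times (\xi + L_\lambda))$ is simultaneously open and closed in $M(V^n,\lambda)$, and these pieces partition $M(V^n,\lambda)$ into $[(\Z_2)^m : L_\lambda] = 2^{m-\mathrm{rank}(\lambda)}$ clopen subsets indexed by $(\Z_2)^m/L_\lambda$.

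\textbf{Step 2 (connectedness of each piece).} First I would observe that $V^n$ is connected, because the cut classes $[\Sigma_i]$ form a basis of $H_{n-1}(Q^n,\Z_2)$ and so a standard non-separation argument shows that $Q^n \setminus \bigcup_i \mathrm{int}(N(\Sigma_i))$ is connected. Consequently every slice $\theta_\lambda(V^n \times \{g\})$ is connected. To link two slices in the same coset, write $g'-g = \lambda(P_{i_1}) + \cdots + \lambda(P_{i_s})$ and chain the intermediate slices $V^n\times\{g\}, V^n\times\{g+\lambda(P_{i_1})\},\ldots,V^n\times\{g'\}$ through points lying on the panels $P_{i_1},\ldots,P_{i_s}$, which are identified across the successive slices by the gluing. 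Thus each clopen piece from Step~1 is in fact connected, so $M(V^n,\lambda)$ has exactly $2^{m-\mathrm{rank}(\lambda)}$ components, and the component through $\theta_\lambda(V^n\times\{0\})$ is precisely $\theta_\lambda(V^n \times L_\lambda)$.

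\textbf{Step 3 (pairwise homeomorphism and the $L_\lambda$-action).} For any $\xi \in (\Z_2)^m$, the natural action of $\xi$ on $M(V^n,\lambda)$ is a self-homeomorphism sending $\theta_\lambda(V^n\times L_\lambda)$ onto $\theta_\lambda(V^n\times(\xi+L_\lambda))$, so all components are pairwise homeomorphic. Restricting the natural $(\Z_2)^m$-action to $L_\lambda$: since $L_\lambda$ preserves every coset, it preserves every component, and the restricted action remains free because the ambient action is free by Theorem~\ref{Thm:Equiv-Isom}. Finally, each $(\Z_2)^m$-orbit meets a given component in a single $L_\lambda$-orbit, so the quotient of any one component by $L_\lambda$ is canonically identified with $M(V^n,\lambda)/(\Z_2)^m \cong Q^n$.

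The one delicate point is the chaining argument in Step~2 establishing connectedness of each clopen piece (together with the preliminary fact that $V^n$ itself is connected); once these are in place, the remaining claims follow formally from the coset structure of $L_\lambda \subset (\Z_2)^m$ and the freeness of the natural action.
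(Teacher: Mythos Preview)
The present paper does not prove Theorem~\ref{thm:comp}; it is quoted from \cite{Yu2010} without argument. So there is no in-paper proof to compare against, and your outline is precisely the natural one.

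Your Steps~1 and~3 are correct as written: the equivalence relation only moves the $(\Z_2)^m$-coordinate within a coset of $L_\lambda$, giving the clopen partition indexed by $(\Z_2)^m/L_\lambda$; the natural action permutes these pieces, so they are pairwise homeomorphic; and restricting the action to $L_\lambda$ preserves each piece, acts freely (this is asserted in the paper just \emph{before} Theorem~\ref{Thm:Equiv-Isom}, not as part of it), and has quotient $Q^n$ because each full $(\Z_2)^m$-orbit meets a fixed piece in exactly one $L_\lambda$-orbit.

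The only point that needs real work is Step~2, and you have isolated it correctly. The chaining of slices is fine: each panel $P_i$ is nonempty, so for any $g$ the images $\theta_\lambda(V^n\times\{g\})$ and $\theta_\lambda(V^n\times\{g+\lambda(P_i)\})$ meet, and iterating over a word in the $\lambda(P_i)$ connects any two slices in the same $L_\lambda$-coset. What deserves a line of justification is the connectedness of $V^n$ itself. A bare appeal to ``$[\Sigma_i]\neq 0$ in $H_{n-1}(Q^n,\Z_2)$'' only tells you that no single $\Sigma_i$ separates $Q^n$; to conclude that the simultaneous cut is connected you should run the non-separation argument inductively, checking at each stage that the next cut-class is still nontrivial in the already-cut manifold (this is where linear \emph{independence} of the $[\Sigma_i]$, and not merely their nonvanishing, is used). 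With that inductive step supplied, your proof is complete.
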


    In addition, $\lambda \in \mathrm{Col}_m(V^n)$ is called \textit{maximally independent}
      if $\text{rank}(\lambda)= k = \dim_{\Z_2} H_{n-1}(Q^n, \Z_2)$.
      Note that if $\lambda \in \mathrm{Col}_m(V^n)$ is maximally
      independent, we must have $m\geq k$.
      \vskip .2cm

     \begin{lem} \label{Lem:Max_Homeo}
        For any $m\geq \dim_{\Z_2} H_{n-1}(Q^n, \Z_2)$,
        if $\lambda_1, \lambda_2 \in \mathrm{Col}_m(V^n)$
          are both maximally
       independent, $M(V^n,\lambda_1)$ must be equivalent to $M(V^n,\lambda_2)$
       as principal $(\Z_2)^m$-bundles over $Q^n$.
       \end{lem}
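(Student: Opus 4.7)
The key observation is that the glue-back construction depends on the coloring $\lambda$ only through its image values $\lambda(P_1),\ldots,\lambda(P_k)\in(\Z_2)^m$, and that any linear automorphism of $(\Z_2)^m$ transports one coloring to another and simultaneously induces an equivalence of the glue-back bundles. So the plan is to first use linear algebra to produce an automorphism $\sigma\in\mathrm{Aut}((\Z_2)^m)$ with $\sigma\circ\lambda_1=\lambda_2$, and then lift $\sigma$ to an equivalence of principal bundles.

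For the first step, since $\lambda_1$ and $\lambda_2$ are both maximally independent, the vectors $\lambda_1(P_1),\ldots,\lambda_1(P_k)$ are linearly independent in $(\Z_2)^m$, and likewise for $\lambda_2$. Because $k\leq m$, I can extend each set to an $\F_2$-basis of $(\Z_2)^m$, say $\{\lambda_1(P_1),\ldots,\lambda_1(P_k),u_{k+1},\ldots,u_m\}$ and $\{\lambda_2(P_1),\ldots,\lambda_2(P_k),v_{k+1},\ldots,v_m\}$. Defining $\sigma$ on these bases by $\sigma(\lambda_1(P_i))=\lambda_2(P_i)$ and $\sigma(u_j)=v_j$ gives a group automorphism of $(\Z_2)^m$ with the required property.

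For the second step, I define
\[
  f:M(V^n,\lambda_1)\longrightarrow M(V^n,\lambda_2),\qquad f\bigl([(x,g)]\bigr):=\bigl[(x,\sigma(g))\bigr].
\]
Well-definedness follows directly from $\sigma\circ\lambda_1=\lambda_2$: if $(x,g)\sim_1(x',g')$ with $x'=\tau_i(x)$ and $g'=g+\lambda_1(P_i)$, then $\sigma(g')=\sigma(g)+\lambda_2(P_i)$, so $(x,\sigma(g))\sim_2(x',\sigma(g'))$. The same formula with $\sigma^{-1}$ gives the inverse, so $f$ is a homeomorphism. The equivariance $f(g\cdot[(x,g_0)])=\sigma(g)\cdot f([(x,g_0)])$ is immediate from the definition of the natural action \eqref{Equ:FreeAction}, and $f$ clearly induces the identity on the orbit space $V^n\times(\Z_2)^m/\sim\ \to Q^n$, which is independent of the coloring. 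Hence $M(V^n,\lambda_1)$ and $M(V^n,\lambda_2)$ are equivalent as principal $(\Z_2)^m$-bundles over $Q^n$.

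There is no real obstacle here: the argument is essentially a change-of-basis transported through the glue-back formula. The only mildly delicate point is to notice that one needs the full maximal independence (rather than some weaker hypothesis), because the $k$ colors $\lambda_j(P_1),\ldots,\lambda_j(P_k)$ must be simultaneously extendible to a basis of $(\Z_2)^m$ in order to get a genuine automorphism of $(\Z_2)^m$; this is exactly what maximal independence together with $m\geq k$ provides.
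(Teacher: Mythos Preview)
Your proof is correct and follows essentially the same approach as the paper's. The paper constructs the automorphism $\phi$ of $(\Z_2)^m$ sending $\lambda_1(P_i)\mapsto\lambda_2(P_i)$, defines $\Phi(x,g)=(x,\phi(g))$ on $V^n\times(\Z_2)^m$, checks that $\Phi$ descends to a homeomorphism $\widetilde{\Phi}:M(V^n,\lambda_1)\to M(V^n,\lambda_2)$, and then verifies the $\sigma$-equivariance and that the induced map on the orbit space is the identity --- exactly what you do, with only cosmetic differences in notation and in whether the map is first defined upstairs or directly on the quotient.
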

       \begin{proof}
         Since $\lambda_1$ and $\lambda_2$ are both maximally independent,
         $\{ \lambda_1(P_1), \cdots ,\lambda_1(P_k) \}$ and $\{ \lambda_2(P_1), \cdots ,\lambda_2(P_k) \}$
         are linearly independent subsets of $(\Z_2)^m$. So there
         exists a group automorphism $\phi$ of $(\Z_2)^m$ so that
         $\phi(\lambda_1(P_i)) = \lambda_2(P_i)$ for each $1\leq i \leq k$.
         Define a homeomorphism $\Phi: V^n\times (\Z_2)^m \rightarrow V^n \times (\Z_2)^m$
         by
           $$\Phi(x,g)= (x,\phi(g)), \ \forall\,  x\in V^n \ \text{and}\ g\in (\Z_2)^m.
           $$
       Let $\theta_{\lambda_i}:  V^n\times (\Z_2)^m  \rightarrow M(V^n,\lambda_i)$ ($i=1,2$)
       be the quotient map defined in~\eqref{Glue_Back}.
       Then obviously $\theta_{\lambda_1}(x,g)=\theta_{\lambda_1}(x',g')$ if and only if
       $\theta_{\lambda_2}(\Phi(x,g))=\theta_{\lambda_2}(\Phi(x',g'))$.
       So $\Phi$ induces a homeomorphism $\widetilde{\Phi}$ from $M(V^n,\lambda_1)$ to
       $M(V^n,\lambda_2)$ by:
         $$ \widetilde{\Phi}(\theta_{\lambda_1}(x,g))= \theta_{\lambda_2}(\Phi(x,g)).$$
       Moreover, $\widetilde{\Phi}$ relates
       the natural $(\Z_2)^m$-actions on $M(V^n,\lambda_1)$ and $M(V^n,\lambda_2)$
       by:
        $$ \widetilde{\Phi}(g'\cdot \theta_{\lambda_1}(x,g))= \phi(g')\cdot
        \widetilde{\Phi}(\theta_{\lambda_1}(x,g)),\ \, \forall\, g' \in (\Z_2)^m.$$
        In addition, it is easy to see that
         $\widetilde{\Phi}$ induces the identity map on
        the orbit space $Q^n$. So by the definition, $M(V^n,\lambda_1)$ and
        $M(V^n,\lambda_2)$ are equivalent as principal $(\Z_2)^m$-bundles over
        $Q^n$.
       \end{proof}
      \vskip .2cm

       \begin{lem} \label{Lem:Max_Homeo_2}
        Suppose $M_1$ and $M_2$ are two principal $(\Z_2)^k$-bundles
        over $Q^n$, where $k=\dim_{\Z_2}H_{n-1}(Q^n,\Z_2)$. If $M_1$ and
        $M_2$ are both connected, $M_1$ must be equivalent to
        $M_2$.
      \end{lem}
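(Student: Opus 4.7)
The plan is to reduce the statement directly to the preceding lemma by turning the bundle-theoretic hypothesis into a statement about colorings of the $\Z_2$-core $V^n$.

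First I would invoke Theorem~\ref{Thm:Equiv-Isom} to represent each bundle $M_i$ ($i=1,2$) as a glue-back construction: there exist $(\Z_2)^k$-colorings $\lambda_i \in \mathrm{Col}_k(V^n)$, namely the associated colorings of the orbit maps $\pi_i\colon M_i \to Q^n$, such that $M_i$ is equivalent to $M(V^n,\lambda_i)$ as a principal $(\Z_2)^k$-bundle over $Q^n$. Thus it suffices to show that $M(V^n,\lambda_1)$ and $M(V^n,\lambda_2)$ are equivalent.

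Next I would extract information from the connectedness hypothesis using Theorem~\ref{thm:comp}. Since $M(V^n,\lambda_i) \cong M_i$ is connected, the component count formula gives $2^{k-\mathrm{rank}(\lambda_i)} = 1$, so $\mathrm{rank}(\lambda_i) = k$ for $i=1,2$. Recalling that $k = \dim_{\Z_2} H_{n-1}(Q^n,\Z_2)$, this says precisely that both $\lambda_1$ and $\lambda_2$ are maximally independent in the sense defined just before Lemma~\ref{Lem:Max_Homeo}.

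At this point Lemma~\ref{Lem:Max_Homeo}, applied with $m=k$, immediately yields an equivalence between $M(V^n,\lambda_1)$ and $M(V^n,\lambda_2)$ as principal $(\Z_2)^k$-bundles over $Q^n$. Composing with the equivalences $M_i \simeq M(V^n,\lambda_i)$ from the first step produces the desired equivalence between $M_1$ and $M_2$. There is no real obstacle here — the lemma is essentially a bookkeeping corollary, and the only point to double-check is that the notion of equivalence is transitive (which is immediate from the definition, since composing the homeomorphisms and composing the corresponding group automorphisms of $(\Z_2)^k$ preserves both conditions in the definition of equivalence, and the induced map on the orbit space remains the identity).
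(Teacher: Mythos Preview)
Your proof is correct and follows essentially the same approach as the paper: represent each $M_i$ as $M(V^n,\lambda_i)$ via Theorem~\ref{Thm:Equiv-Isom}, use Theorem~\ref{thm:comp} and connectedness to conclude that each $\lambda_i$ is maximally independent, and then apply Lemma~\ref{Lem:Max_Homeo}. Your additional remark on transitivity of equivalence is a harmless clarification that the paper leaves implicit.
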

      \begin{proof}
          By the notations in the above discussion and
          Theorem~\ref{Thm:Equiv-Isom}, we have
           $$M_i \cong M(V^n,\lambda_i)
          \ \text{for some}\ \lambda_i\in \mathrm{Col}_k(V^n), \ i=1,2 $$
          In addition, since $M_1$ and $M_2$ are both connected, Theorem~\ref{thm:comp} implies
          that $\mathrm{rank}(\lambda_1) = \mathrm{rank}(\lambda_2) = k$, i.e. $\lambda_1$
          and $\lambda_2$ are both maximally independent. So by
          Lemma~\ref{Lem:Max_Homeo}, $M(V^n,\lambda_1)$ and $M(V^n,\lambda_2)$ are
          equivalent as principal $(\Z_2)^k$-bundles over $Q^n$.
      \end{proof}
      \vskip .2cm

    Next, let us study some relations between
    $M(V^n, \lambda)$ for different $\lambda \in \mathrm{Col}_m(V^n)$.
    For the sake of conciseness, for any topological space $B$ and
    any field $\mathbb{F}$, we define
         $$ \quad \mathrm{hrk}(B, \mathbb{F}) : =
         \sum^{\infty}_{i=0} \dim_{\mathbb{F}} H^i(B, \mathbb{F}). $$

         \vskip .2cm

     \begin{lem} \label{Lem:Double_Covering}
           For any double covering $\xi: \widetilde{B} \rightarrow B$ and  $\forall\, i\geq 0$,
           $ \dim_{\Z_2} H^i(\widetilde{B},\Z_2) \leq  2 \cdot \dim_{\Z_2}
           H^i(B,\Z_2)$. So
          $ \mathrm{hrk}(\widetilde{B},\Z_2) \leq 2 \cdot
          \mathrm{hrk}(B,\Z_2)$.
       \end{lem}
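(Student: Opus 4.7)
The plan is to extract the inequality from a natural short exact sequence of $\Z_2$-chain complexes that exists for any double cover.

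For each singular simplex $\sigma\colon\Delta^{n}\to B$, simple connectedness of $\Delta^{n}$ and the covering homotopy property give $\sigma$ exactly two lifts $\widetilde{\sigma},T\widetilde{\sigma}\colon\Delta^{n}\to \widetilde{B}$, where $T$ denotes the deck involution of $\xi$. I would use this to define two chain maps: the pushforward $\xi_{\#}\colon C_{*}(\widetilde{B},\Z_2)\to C_{*}(B,\Z_2)$ sending each singular simplex of $\widetilde{B}$ to its projection in $B$, and the transfer $\tau\colon C_{*}(B,\Z_2)\to C_{*}(\widetilde{B},\Z_2)$, $\sigma\mapsto\widetilde{\sigma}+T\widetilde{\sigma}$. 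Compatibility with face operators is immediate because $\widetilde{\sigma}\circ d_{j}$ and $T\widetilde{\sigma}\circ d_{j}$ are the two lifts of $\sigma\circ d_{j}$, and direct computation gives $\xi_{\#}\tau=0$ (each base simplex appears twice, hence vanishes mod $2$) together with $\ker\xi_{\#}=\mathrm{im}\,\tau$. Thus one has a short exact sequence of chain complexes
\begin{equation*}
0\longrightarrow C_{*}(B,\Z_2)\stackrel{\tau}{\longrightarrow}C_{*}(\widetilde{B},\Z_2)\stackrel{\xi_{\#}}{\longrightarrow}C_{*}(B,\Z_2)\longrightarrow 0.
\end{equation*}

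Dualizing and passing to the induced long exact sequence in cohomology yields
\begin{equation*}
\cdots\to H^{i}(B,\Z_2)\stackrel{\xi^{*}}{\to}H^{i}(\widetilde{B},\Z_2)\stackrel{\tau^{*}}{\to}H^{i}(B,\Z_2)\to H^{i+1}(B,\Z_2)\to\cdots.
\end{equation*}
Exactness at $H^{i}(\widetilde{B},\Z_2)$ produces a short exact sequence $0\to\mathrm{im}(\xi^{*})\to H^{i}(\widetilde{B},\Z_2)\to\mathrm{im}(\tau^{*})\to 0$ in which both extremes are $\Z_2$-subspaces of $H^{i}(B,\Z_2)$, whence $\dim_{\Z_2}H^{i}(\widetilde{B},\Z_2)\le 2\dim_{\Z_2}H^{i}(B,\Z_2)$; summing over $i$ then gives $\mathrm{hrk}(\widetilde{B},\Z_2)\le 2\,\mathrm{hrk}(B,\Z_2)$.

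The only step that requires any care is the verification of the chain-level short exact sequence, which reduces to elementary singular-chain bookkeeping using the covering-space lifting property for simplices; no deeper topology is needed, and the argument is essentially the degree $p=2$ case of the classical Smith/transfer exact sequence for a free involution.
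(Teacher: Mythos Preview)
Your proof is correct and essentially parallels the paper's: both extract the bound from the same long exact sequence $\cdots\to H^i(B,\Z_2)\to H^i(\widetilde{B},\Z_2)\to H^i(B,\Z_2)\to H^{i+1}(B,\Z_2)\to\cdots$, which the paper invokes as the Gysin sequence of the $S^0$-bundle $\xi$ (with connecting map given by cup product with the first Stiefel--Whitney class), while you derive it directly from the chain-level transfer short exact sequence. The resulting dimension count is identical.
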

       \begin{proof}
          The Gysin sequence of $\xi: \widetilde{B} \rightarrow B$ in $\Z_2$-coefficient reads:
      \[     \cdots \longrightarrow  H^{i-1}(B,\Z_2) \overset{\phi_{i-1}}{\longrightarrow} H^i(B,\Z_2)
            \overset{\xi^*}{\longrightarrow} H^i(\widetilde{B},\Z_2) \longrightarrow H^i(B,\Z_2)
            \overset{\phi_i}{\longrightarrow}  \cdots
                 \]
         where $e\in H^1(B)$ is the Euler class (or first Stiefel-Whitney
         class) of $\widetilde{B}$,
          and $\phi_i(\gamma) = \gamma \cup e,\ \forall\, \gamma \in H^i(B,\Z_2)$. Then
          by the exactness of the Gysin sequence,
       \begin{align*}
            \dim_{\Z_2} H^i(\widetilde{B},\Z_2) &= \dim_{\Z_2} H^i(B,\Z_2) -
         \dim_{\Z_2}\text{Im}(\phi_{i-1}) + \dim_{\Z_2}\text{ker}(\phi_i) \\
            &= 2 \cdot \dim_{\Z_2} H^i(B,\Z_2) -  \dim_{\Z_2}\text{Im}(\phi_{i-1}) -
            \dim_{\Z_2}\text{Im}(\phi_i) \\
            & \leq  2 \cdot \dim_{\Z_2} H^i(B,\Z_2)
      \end{align*}
       \end{proof}

       \begin{rem}
       In Lemma~\ref{Lem:Double_Covering},
       if we replace the $\Z_2$-coefficient by $\Z_p$ ($p$ is an odd prime) or rational
        coefficient, the conclusion in the lemma might fail in some cases.
       \end{rem}
  \vskip .2cm

     For any panel $P_j \subset \mathcal{P}(V^n)$, we define the following
        space which will play an important role later.
        \begin{equation} \label{Equ:Partial_Glue-back}
           M_{\backslash P_j}(V^n,\lambda) := V^n\times (\Z_2)^m \slash
           \sim_{ P_j}
           \qquad\qquad
           \end{equation}
        where  $(x,g) \sim_{P_j} (x',g')$ whenever
         $x' = \tau_i(x)$ for some $P_i \neq P_j$ and
              $g' = g+ \lambda(P_i) \in (\Z_2)^m$. In other words,
       $M_{\backslash P_j}(V^n,\lambda)$ is the quotient space
       of $V^n\times (\Z_2)^m$ under the rule in~\eqref{Glue_Back} except that
       we leave the interior of those facets in $P_j\times (\Z_2)^m$ open.
       We call $ M_{\backslash P_j}(V^n,\lambda)$ a \textit{partial glue-back}
       from $(V^n,\lambda)$. Let the corresponding quotient map be
    $ \theta^{\backslash P_j}_{\lambda} : V^n \times (\Z_2)^m \rightarrow
    M_{\backslash P_j}(V^n,\lambda) $. Then
    $\theta^{\backslash P_j}_{\lambda}(P_j\times (\Z_2)^m)$ is the boundary of
    $M_{\backslash P_j}(V^n,\lambda)$.
    \vskip .2cm

     \begin{lem} \label{Lem:Max_Indep}
        Suppose $\lambda_{max} \in \mathrm{Col}_k(V^n) $ is a
         maximally independent $(\Z_2)^k$-coloring on $V^n$,  where
         $k=\dim_{\Z_2}H_{n-1}(Q^n, \Z_2)$.
         Then for any $\lambda \in \mathrm{Col}_k(V^n)$,
           \[  \mathrm{hrk} (M(V^n,\lambda),\Z_2) \geq
           \mathrm{hrk} (M(V^n,\lambda_{max}),\Z_2) .   \]
      \end{lem}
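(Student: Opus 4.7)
\medskip

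The plan is to reduce everything to comparing each connected component of $M(V^n,\lambda)$ with $M(V^n,\lambda_{max})$, and then apply Lemma~\ref{Lem:Double_Covering} iteratively. First, let $r = \mathrm{rank}(\lambda)$ and let $M_0$ denote any one connected component of $M(V^n,\lambda)$. By Theorem~\ref{thm:comp}, $M(V^n,\lambda)$ splits as a disjoint union of $2^{k-r}$ copies of $M_0$, and $M_0$ is a connected principal $L_\lambda \cong (\Z_2)^r$-bundle over $Q^n$. In particular,
\[
   \mathrm{hrk}(M(V^n,\lambda),\Z_2) \; = \; 2^{k-r}\cdot \mathrm{hrk}(M_0,\Z_2).
\]

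Next, I would exhibit $M_0$ as a quotient of $M(V^n,\lambda_{max})$ by a subgroup of the natural $(\Z_2)^k$-action. The principal $(\Z_2)^k$-bundle $M(V^n,\lambda_{max})$ is connected, so (as in the proof of Lemma~\ref{Lem:Max_Homeo_2}) its classifying map $\beta : H_1(Q^n,\Z_2) \to (\Z_2)^k$ is an isomorphism, since $k = \dim_{\Z_2} H_{n-1}(Q^n,\Z_2) = \dim_{\Z_2} H_1(Q^n,\Z_2)$. On the other hand, $M_0$ is a connected principal $(\Z_2)^r$-bundle, classified by a surjection $\alpha : H_1(Q^n,\Z_2) \to L_\lambda$. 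Setting $p = \alpha\circ \beta^{-1} : (\Z_2)^k \twoheadrightarrow L_\lambda$, the quotient $M(V^n,\lambda_{max})\slash \ker(p)$ is a principal $L_\lambda$-bundle over $Q^n$ classified by $p\circ \beta = \alpha$, and hence equivalent to $M_0$ by the argument already used in Lemma~\ref{Lem:Max_Homeo_2}. Therefore $M(V^n,\lambda_{max}) \to M_0$ is a regular covering with deck group $\ker(p) \cong (\Z_2)^{k-r}$.

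Finally, I would factor this $(\Z_2)^{k-r}$-covering into a tower of $k-r$ double coverings (choosing any composition series of $\ker(p)$) and apply Lemma~\ref{Lem:Double_Covering} inductively, obtaining
\[
   \mathrm{hrk}(M(V^n,\lambda_{max}),\Z_2) \; \leq \; 2^{k-r}\cdot \mathrm{hrk}(M_0,\Z_2).
\]
Combining this with the component count above yields the required inequality
\[
   \mathrm{hrk}(M(V^n,\lambda),\Z_2) \; = \; 2^{k-r}\cdot \mathrm{hrk}(M_0,\Z_2) \; \geq \; \mathrm{hrk}(M(V^n,\lambda_{max}),\Z_2).
\]
The main (though mild) obstacle is the second step: one must check that the abstract equivalence of $M_0$ with $M(V^n,\lambda_{max})\slash \ker(p)$ as principal bundles over $Q^n$ can be arranged so that the covering map on total spaces is genuinely a composition of double covers, so that Lemma~\ref{Lem:Double_Covering} applies. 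Since $\ker(p)\cong (\Z_2)^{k-r}$ admits a filtration by subgroups of index $2$, this follows by passing through the corresponding intermediate quotients.
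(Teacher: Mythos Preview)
Your argument is correct, but it follows a different path from the paper's. The paper proceeds explicitly within the glue-back framework: starting from $\lambda$ with $\mathrm{rank}(\lambda)=s$, it builds a chain of colorings $\lambda=\lambda_0,\lambda_1,\ldots,\lambda_{k-s}$ by replacing the color on one panel $P_{s+j}$ at a time by a new independent element $\omega_j$, so that $\lambda_{k-s}$ is maximally independent. For each step it constructs, via the partial glue-back $M_{\backslash P_{s+j}}(V^n,\lambda_j)$, an explicit free involution on a connected component $\widetilde K$ of $M(V^n,\lambda_j)$ whose quotient is a component $K$ of $M(V^n,\lambda_{j-1})$; Lemma~\ref{Lem:Double_Covering} then gives $\mathrm{hrk}(\widetilde K,\Z_2)\le 2\,\mathrm{hrk}(K,\Z_2)$, and doubling the component count yields $\mathrm{hrk}(M(V^n,\lambda_{j-1}),\Z_2)\ge \mathrm{hrk}(M(V^n,\lambda_j),\Z_2)$. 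Your approach bypasses this panel-by-panel interpolation by invoking the classifying map $\Lambda_\pi\in\mathrm{Hom}(H_1(Q^n,\Z_2),(\Z_2)^k)$ directly: you identify the component $M_0$ with $M(V^n,\lambda_{max})/\ker p$ abstractly, then factor $\ker p\cong(\Z_2)^{k-r}$ by a composition series. This is cleaner and shorter; the ``obstacle'' you flag is really none, since a free $(\Z_2)^{k-r}$-action automatically factors as a tower of double covers through the intermediate quotients. What the paper's hands-on construction buys is reusability: the same explicit involution $\widetilde\eta$ is invoked verbatim later in the proof of Proposition~\ref{prop:Partial-Quotient}, where an explicit identification (not just an abstract equivalence) of $M(V^n,\lambda)$ with a quotient of $M(V^n,\lambda_0)$ is needed to match it with a partial quotient of $\mathcal Z_{P^n}$.
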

      \begin{proof}
        Without loss of generality, suppose $\{ \lambda(P_1),\cdots, \lambda(P_s) \}$
        is a $\Z_2$-linear basis of $L_{\lambda}$. Choose
        $\omega_1, \cdots, \omega_{k-s} \in (\Z_2)^k$
        so that $\{ \lambda(P_1),\cdots, \lambda(P_s),  \omega_1, \cdots, \omega_{k-s}  \}$
        forms a $\Z_2$-linear basis of $(\Z_2)^k$. Then we define a sequence of
         $(\Z_2)^k$-colorings $\lambda_0,\cdots, \lambda_{k-s}$ on $V^n$
      as following. For any $0\leq j \leq k-s$, let
        \[  \lambda_j(P_i) :=  \left\{
          \begin{array}{ll}
             \lambda(P_i),  & \hbox{$1\leq i \leq s$ or $s+j<i\leq k$;} \\
             \omega_{i-s}, & \hbox{$s+1\leq i \leq s+j$.}
          \end{array}
        \right.
       \]
      Obviously, $\lambda_0=\lambda$, $L_{\lambda} = L_{\lambda_0} \subset  L_{\lambda_1}
     \subset \cdots \subset L_{\lambda_{k-s}} = (\Z_2)^k$ and
      $\dim_{\Z_2}(L_{\lambda_{j+1}}) = \dim_{\Z_2}(L_{\lambda_j}) +1$.
   So $\lambda_{k-s} \in \mathrm{Col}_k(V^n)$ is maximally independent.
   By Lemma~\ref{Lem:Max_Homeo}, $\mathrm{hrk} (M(V^n,\lambda_{max}),\Z_2) = \mathrm{hrk}
   (M(V^n,\lambda_{k-s}),\Z_2)$. Then it suffices to show that
    $\mathrm{hrk}(M(V^n,\lambda_{j-1}),\Z_2)  \geq  \mathrm{hrk}(M(V^n,\lambda_{j}),\Z_2)$
     for any $1\leq j \leq k-s$. \vskip .2cm

     Notice that the only difference between
    $\lambda_j$ and $\lambda_{j-1}$ is that:
    $\lambda_j(P_{s+j})= \omega_j$ while $\lambda_{j-1}(P_{s+j})=\lambda(P_{s+j})$. So
    $L_{\lambda_j} = L_{\lambda_{j-1}} \oplus \langle \omega_j \rangle \subset (\Z_2)^k$.
     Let
    $\theta_j: V^n\times (\Z_2)^k \rightarrow M(V^n,\lambda_j)$ be the quotient map
    defined by~\eqref{Glue_Back} for each $j$.\vskip .2cm

    For a fixed $j$, let $\widetilde{K}$ and $K$ be a connected component of
    $M(V^n,\lambda_j)$ and $M(V^n,\lambda_{j-1})$ respectively.
     By Theorem~\ref{thm:comp}, we can assume that:
      $$ \widetilde{K} = \theta_{j}(V^n\times L_{\lambda_{j}}),\quad
      K = \theta_{j-1}(V^n\times L_{\lambda_{j-1}}).$$

   Next, we define a free involution
    $\eta$ on $\widetilde{K}$ by: for any $[(x,g)] \in
    \widetilde{K}$,
     \begin{equation} \label{Equ:Involution}
        \eta([(x,g)])=\left( \lambda(P_{s+j})+ \omega_j\right)\cdot [(x,g)]
     \overset{\eqref{Equ:FreeAction}}{=} [(x,g+\lambda(P_{s+j})+ \omega_j)].
     \end{equation}
         \vskip .2cm
      \textbf{Claim:} the orbit space of $\widetilde{K}$ under the free involution
      $\eta$ is homeomorphic to $K$. So $\widetilde{K}$ is a double covering of $K$
        (see Example~\ref{Exam:Covering}).
      \vskip .2cm

      To prove the claim, first let
      $\theta^{\backslash P_{s+j}}_j : V^n\times (\Z_2)^m \rightarrow M_{\backslash P_{s+j}}
        (V^n,\lambda_j)$ be the quotient map of the partial glue-back defined
        by~\eqref{Equ:Partial_Glue-back}. For any $(x,g)\in V^n \times (\Z_2)^m$,
        denote
       $\overline{(x,g)} := \theta^{\backslash P_{s+j}}_j (x,g)$.
       And we define
        $$ W^n := \theta^{\backslash P_{s+j}}_j(V^n\times
        L_{\lambda_j}). $$
        Geometrically, $W^n$ is the quotient space
       of $V^n\times L_{\lambda_j}$ under $\theta_j$ except that we do not
      glue those facets in $P_{s+j} \times L_{\lambda_j}$.
      By the definition, $W^n = W^n_0 \cup W^n_1$ where
       $$ W^n_0 = \theta^{\backslash P_{s+j}}_j(V^n\times  L_{\lambda_{j-1}}),\quad
        W^n_1 = \theta^{\backslash P_{s+j}}_j (V^n\times
      ( L_{\lambda_{j-1}} + \omega_{j} )).$$
       Let  $ A_0 = \theta^{\backslash P_{s+j}}_j( P_{s+j}\times
      L_{\lambda_{j-1}}) \subset \partial W^n_0$,
      $ A_1 = \theta^{\backslash P_{s+j}}_j( P_{s+j}\times (L_{\lambda_{j-1}}+\omega_j))
       \subset \partial W^n_1$. \vskip .2cm

      Here, the fact that $\omega_j$ is linearly independent from
      $L_{\lambda_{j-1}}$ is essential for these constructions. Otherwise,
       $W^n_0$ and $W^n_1$ would be the same space.
         \vskip .2cm

       It is easy to see that $\widetilde{K}$ is the gluing of
       $W^n_0$ and $W^n_1$ by a homeomorphism
       $\varphi : A_0 \rightarrow A_1$ defined by:
       for $\forall \, x_0\in P_{s+j}$ and $\forall\, g_0\in L_{\lambda_{j-1}}$,
       $$ \overline{(x_0,g_0)} \in A_0 \overset{\varphi}{\longrightarrow}
       \overline{(\tau_{s+j}(x_0), g_0+ \omega_j)} \in A_1. $$
     Let $p: W^n =W^n_0 \cup W^n_1  \rightarrow W^n_0 \cup_{\varphi} W^n_1 =\widetilde{K}$
     denote this quotient map.
      So by our notations,
      $p\,(\,\overline{(x,g)}\,) = [(x,g)]$ for any $\overline{(x,g)} \in W^n$. \vskip .2cm

     Obviously, we have $\widetilde{K} = p(W^n_0) \cup p(W^n_1)$
     and $p(W^n_0) \cap p(W^n_1) = p(A_0) = p(A_1)$. The key observation here
     is that the involution $\eta$ maps
     $p(W^n_0)$ homeomorphically to $p(W^n_1)$, and the action of
     $\eta$ on $p(A_0) = p(A_1)$ is: for any $\overline{(x_0,g_0)}\in A_0$,
       \begin{align*}
         \eta ( p\, (\,\overline{(x_0,g_0)}\,) ) & = \eta([(x_0,g_0)]) = \eta([(\tau_{s+j}(x_0), g_0+ \omega_{j})]) \\
              & \overset{\eqref{Equ:Involution}}{=}
               [(\tau_{s+j}(x_0), g_0 + \lambda(P_{s+j}))] =
              p(\, \overline{(\tau_{s+j}(x_0), g_0+ \lambda(P_{s+j}))} \, )
         \end{align*}
        So the orbit space of $\widetilde{K}$ under the action of $\eta$ is homeomorphic to
        the quotient space of $W^n_0$ by identifying its boundary
        point $\overline{(x_0,g_0)}\in A_0 $
        with another point $\overline{(\tau_{s+j}(x_0), g_0+ \lambda(P_{s+j}))}\in A_0$, which is exactly the same as
        $\theta_{j-1}(V^n\times L_{\lambda_{j-1}}) = K$
        (see Example~\ref{Exam:Covering}). So our claim is proved.
        \vskip .2cm

    Then by Lemma~\ref{Lem:Double_Covering}, $\mathrm{hrk}(\widetilde{K},\Z_2) \leq
    2\cdot \mathrm{hrk}(K,\Z_2)$. Moreover, by
    Theorem~\ref{thm:comp}, the connected components in each
    $M(V^n,\lambda_{j})$ are pairwise homeomorphic and
    the number of connected components
    of $M(V^n,\lambda_{j-1})$ is twice that of $M(V^n,\lambda_{j})$,
    so we have $ \mathrm{hrk}(M(V^n,\lambda_{j-1}),\Z_2)\geq
   \mathrm{hrk}(M(V^n,\lambda_{j}),\Z_2)$. The lemma is proved.
  \end{proof}
  \vskip .7cm

  \section{Proof of Theorem~\ref{thm:main}}  \label{Sec3}

   First, we quote a lemma shown in~\cite{Uto09}.
   But we will slightly rephrase the original statement of this lemma
    to adapt to our proof of Theorem~\ref{thm:main}.\vskip .2cm

   \begin{lem}[Ustinovsky~\cite{Uto09}] \label{Lem:Connect}
      Let $(X,A)$ be a pair of CW-complexes such that $A$ has a
      collar neighborhood $U(A)$ in $X$, that is, $(U(A),A) \cong (A\times [0,1),A\times
      0)$. Suppose we have a homeomorphism $\varphi:  A \rightarrow A$ which can be extended to
      a homeomorphism $\widetilde{\varphi} : X \rightarrow X$.
      Let $Y=X_1 \cup_{\varphi} X_2$ be the space obtained by gluing two copies of $X$
      along $A$ via the map $\varphi$.
      Then for any field $\mathbb{F}$, we have:
        $\mathrm{hrk}(Y,\mathbb{F})\geq \mathrm{hrk}(A,\mathbb{F})$.
   \end{lem}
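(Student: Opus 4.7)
The plan is to apply the Mayer--Vietoris sequence to the decomposition $Y = X_1 \cup X_2$ and to use the extension $\widetilde{\varphi}: X \to X$ of $\varphi$ in order to identify the image of the natural restriction map $H^*(X_1)\oplus H^*(X_2) \to H^*(A)$.

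First I would use the collar neighborhood $U(A)\cong A\times[0,1)$ to thicken each $X_j$ to an open neighborhood $\mathcal{U}_j\subset Y$ which deformation retracts onto $X$, with $\mathcal{U}_1\cap\mathcal{U}_2\simeq A$. With $\mathbb{F}$-coefficients the Mayer--Vietoris sequence then reads
\begin{equation*}
\cdots\to H^{i-1}(A)\xrightarrow{\delta} H^i(Y)\xrightarrow{\beta_i} H^i(X)\oplus H^i(X)\xrightarrow{\alpha_i} H^i(A)\to\cdots ,
\end{equation*}
and writing $\iota:A\hookrightarrow X$ for the inclusion, $\alpha_i$ takes the shape $\alpha_i(u,v)=\iota^*u-\varphi^*(\iota^*v)$, where the appearance of $\varphi^*$ reflects the fact that the two embeddings of $A$ into $Y$ via $X_1$ and $X_2$ differ by $\varphi$.

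The key step is to pin down $\mathrm{Im}(\alpha_i)$. Set $I^i:=\iota^*(H^i(X;\mathbb{F}))\subset H^i(A;\mathbb{F})$. The identity $\widetilde{\varphi}\circ\iota=\iota\circ\varphi$ yields $\varphi^*\circ\iota^*=\iota^*\circ\widetilde{\varphi}^*$. Since $\widetilde{\varphi}^*$ is an automorphism of $H^i(X;\mathbb{F})$, this gives $\varphi^*(I^i)=I^i$, and therefore $\mathrm{Im}(\alpha_i)=I^i$ (any $\iota^*u$ equals $\alpha_i(u,0)$). From exactness of the sequence I then obtain
\begin{equation*}
\dim_{\mathbb{F}} H^i(Y)=\dim\ker(\alpha_i)+\dim\bigl(H^{i-1}(A)/\mathrm{Im}(\alpha_{i-1})\bigr)=2\dim H^i(X)-\dim I^i+\dim H^{i-1}(A)-\dim I^{i-1}.
\end{equation*}

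Summing over all $i$ and reindexing the $I^{i-1}$ sum gives
\begin{equation*}
\mathrm{hrk}(Y,\mathbb{F})=\mathrm{hrk}(A,\mathbb{F})+2\Bigl(\mathrm{hrk}(X,\mathbb{F})-\sum_i \dim_{\mathbb{F}} I^i\Bigr)\geq \mathrm{hrk}(A,\mathbb{F}),
\end{equation*}
where the last inequality uses $\dim I^i\leq\dim H^i(X)$ for each $i$. The main obstacle is mostly bookkeeping: verifying that Mayer--Vietoris applies cleanly in this CW-pair setting (which is precisely what the collar hypothesis is tailored to guarantee) and keeping sign/orientation conventions straight so that $\mathrm{Im}(\alpha_i)=I^i$ really holds. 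All the conceptual content is packaged into the $\varphi^*$-invariance of $I^i$, which is the only place the extension hypothesis on $\widetilde{\varphi}$ enters.
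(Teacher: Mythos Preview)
Your argument is correct and follows essentially the same route as the paper's own proof: both apply Mayer--Vietoris to the thickened open cover and use the extension $\widetilde{\varphi}$ to control the restriction map into $H^*(A)$. The only cosmetic difference is that the paper bounds the kernel from below (via the diagonal $(\gamma,(\widetilde{\varphi}^{-1})^*\gamma)$), whereas you identify the image exactly as $I^i=\iota^*H^i(X)$; these are dual formulations of the same observation, and your version in fact yields the slightly sharper equality $\mathrm{hrk}(Y,\mathbb{F})=\mathrm{hrk}(A,\mathbb{F})+2\bigl(\mathrm{hrk}(X,\mathbb{F})-\sum_i\dim I^i\bigr)$.
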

   \begin{proof}
      The argument here is almost the same as in~\cite{Uto09}.
      Let $U_1(A)$ and $U_2(A)$ be
       the collar neighborhoods of $A$ in $X_1$ and $X_2$
       respectively.
     Consider an open cover
      $Y=W_1 \cup W_2$ where $W_1 = X_1\cup U_2(A)$ and $W_2=X_2 \cup
      U_1(A)$. Then the Mayer-Vietoris sequence of cohomology groups for this open cover
      reads (we omit all the coefficient $\mathbb{F}$ below):
      \[ \cdots \rightarrow H^{j-1}(W_1 \cap W_2) \overset{\delta^*_{(j)}}{\longrightarrow}
       H^j(Y) \overset{g^*_{(j)}}{\longrightarrow} H^j(W_1)\oplus H^j(W_2)
       \overset{p^*_{(j)}}{\longrightarrow} H^j(W_1\cap W_2) \rightarrow \cdots \]

       Here the map $p^*_{(j)} = i_1^*\oplus -i_2^*$, where
       $i_1$ and $i_2$ are inclusions of $W_1\cap W_2$ into $W_1$
       and $W_2$ respectively. Since $W_1$ and $W_2$ are both homotopy equivalent to
      $X$ and $W_1\cap W_2 = U_1(A) \cup U_2(A) \cong A \times
      (-1,1)$, we get another long exact sequence which is equivalent to the above one:
     \[ \cdots \longrightarrow H^{j-1}(A)
     \overset{\widehat{\delta}^*_{(j)}}{\longrightarrow} H^j(Y)
     \overset{\widehat{g}^*_{(j)}}{\longrightarrow}
      H^j(X_1)\oplus H^j(X_2)
      \overset{\widehat{p}^*_{(j)}}{\longrightarrow} H^j(A) \longrightarrow \cdots \]
      Notice that the $\widehat{p}^*_{(j)} = \iota_1^*\oplus -(\iota_2\circ \varphi)^*$
      where
       $\iota_1$ and $\iota_2$ are inclusions of $A$ into $X_1$
       and $X_2$ respectively. For any $\gamma \in H^j(X_1)$,
       it is easy to see that
       $(\gamma, (\widetilde{\varphi}^{-1})^*\gamma)$ is in
       $\mathrm{ker}(\widehat{p}^*_{(j)})$. This implies that
       $\dim\mathrm{ker}(\widehat{p}^*_{(j)}) \geq \dim H^j(X)$ and
       so $\dim\mathrm{Im}(\widehat{p}^*_{(j)}) \leq \dim H^j(X)$ . Then we have:
       \begin{align*}
         \dim H^j(Y) &= \dim \mathrm{ker}(\widehat{g}^*_{(j)}) + \dim \mathrm{Im}(\widehat{g}^*_{(j)})
                    = \dim \mathrm{Im}(\widehat{\delta}^*_{(j)}) + \dim
                    \mathrm{ker}(\widehat{p}^*_{(j)})
                                   \\
                 &\geq \dim H^{j-1}(A) - \dim
                  \mathrm{Im}(\widehat{p}^*_{(j-1)}) + \dim H^j(X)  \\
                &\geq \dim H^{j-1}(A) - \dim H^{j-1}(X) + \dim
                H^j(X).
     \end{align*}
      By summing up these inequalities over all indices $j$, we
      get:
      \begin{align*}
        \mathrm{hrk}(Y,\F) = \sum_{j} \dim H^j(Y) &\geq \sum_{j} \dim H^{j-1}(A) - \dim H^{j-1}(X) + \dim
                H^j(X) \\
              &\quad = \sum_{j} \dim H^{j-1}(A) =
              \mathrm{hrk}(A,\F).
        \end{align*}
   \end{proof}

 \begin{rem}
   In the above lemma, the assumption that $\varphi: A\rightarrow A$
  can be extended to a homeomorphism $\widetilde{\varphi}: X \rightarrow X$
  is essential, otherwise the claim may not
  be true.
  \end{rem}
   \vskip .3cm

   \textbf{Proof of Theorem~\ref{thm:main}:}
     We shall organize the proof by an induction on the dimension of $M^n$.
   When $n=1$, since a principal $(\Z_2)^m$-bundle over a circle must be
   a disjoint union of $2^m$ or $2^{m-1}$ circles,
   so the theorem holds. Then we assume
   the theorem holds for manifolds with dimension less than $n$.
   \vskip .2cm

    Suppose $P^n$ is an $n$-dimensional simple convex polytope with $k+n$ facets
    $F_1,\cdots, F_{k+n}$ ($k\geq 1$) and
    $\pi_{\mu}: Q^n \rightarrow P^n$ is a small cover over $P^n$ with the
    characteristic function $\mu$. For any face
    $\mathbf{f}=F_{i_1}\cap \cdots \cap F_{i_l}$ of $P^n$, let
    $G^{\mu}_{\mathbf{f}}$ be the rank-$l$ subgroup of $(\Z_2)^n$ generated by
    $\mu(F_1),\cdots, \mu(F_l)$.
     Then by the definition,
     \begin{equation} \label{Equ:Glue-SmallCov}
       \text{$Q^n= P^n\times (\Z_2)^n \slash
    \sim$,\ \ $(p,w)\sim (p',w') \Longleftrightarrow p=p', w-w'\in
    G^{\mu}_{\mathbf{f}(p)}$},
    \end{equation}
     where $\mathbf{f}(p)$ is the unique face of $P^n$ that contains
    $p$ in its relative interior.  It was shown in~\cite{DaJan91}
    that the $\Z_2$-Betti numbers of $Q^n$ can be computed from the
   $h$-vector of $P^n$. In particular, $H_{n-1}(Q^n,\Z_2) \cong
   (\Z_2)^{k}$. \vskip .2cm

     Next, we choose an
     arbitrary vertex $v_0$ of $P^n$. By re-indexing the facets of $P^n$,
     we can assume $F_1,\cdots, F_k$ are those facets of $P^n$ that are not
     incident to $v_0$. Then according to~\cite{DaJan91},
      the homology classes of the facial submanifolds
      $\pi_{\mu}^{-1}(F_1),\cdots, \pi_{\mu}^{-1}(F_k)$ form a $\Z_2$-linear basis of
      $H_{n-1}(Q^n,\Z_2)$.
    Cutting $Q^n$ open along $\pi_{\mu}^{-1}(F_1),\cdots,
     \pi_{\mu}^{-1}(F_k)$ will give us
    a $\Z_2$-core of $Q^n$, denoted by $V^n$.
     We can think of $V^n$
    as a partial gluing of the $2^n$ copies of $P^n$ according to the rule
    in~\eqref{Equ:Glue-SmallCov} except that we leave the facets
    $F_1,\cdots, F_k$ in each copy of $P^n$ open (see Figure~\ref{p:Panel_biject_3} for example).
     Let
    $\zeta : P^n\times (\Z_2)^n \rightarrow V^n$ denote the quotient
    map and let
    $P_1,\cdots, P_k$ be the panels of $V^n$ corresponding to
    $\pi_{\mu}^{-1}(F_1),\cdots,\pi_{\mu}^{-1}(F_k)$. Then
    each $P_i$ consists of $2^n$ copies of $F_i$ and the involutive panel structure
     $\{\tau_i: P_i \rightarrow P_i\}_{1\leq i \leq k}$ on $V^n$ can be written as:
     \begin{equation} \label{Equ:Invol-Panel}
      \tau_i(\zeta(p,w)) = \zeta(p, w+\mu(F_i) ), \ \,
      \forall\, p \in F_i, \forall w\in (\Z_2)^n
     \end{equation}
     Obviously, each $\tau_i$ extends to an automorphism $\widetilde{\tau}_i$ of
      $V^n$ given by the same form:
      \begin{equation} \label{Equ:Commute}
       \widetilde{\tau}_i(\zeta(p,w)) = \zeta(p, w+\mu(F_i)), \ \,
      \forall\, p \in P^n, \forall w\in (\Z_2)^n
      \end{equation}
      And these $\widetilde{\tau}_i$ commute with each other, i.e.
      $\widetilde{\tau}_i \circ \widetilde{\tau}_j =
      \widetilde{\tau}_j \circ \widetilde{\tau}_i$, $1\leq i , j \leq k$.
       So
      each $\widetilde{\tau}_i$ will preserve any panel $P_j$ of $V^n$.\vskip .2cm

      \begin{figure}
         \includegraphics[width=0.45\textwidth]{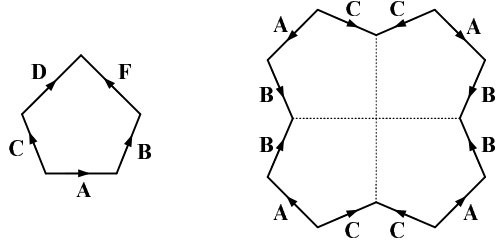}
          \caption{A $\Z_2$-core of a small cover in dimension $2$}\label{p:Panel_biject_3}
      \end{figure}

      To prove Theorem~\ref{thm:main}, it suffices to show that
        $\mathrm{hrk}(M(V^n,\lambda),\Z_2) \geq 2^m$ for any $\lambda
        \in\mathrm{Col}_m(V^n)$ (because of Theorem~\ref{Thm:Equiv-Isom}). \vskip .2cm

      First, we assume $m=k$.
      Let $\lambda_0$ be a maximally independent $(\Z_2)^k$-coloring of
      $V^n$, i.e. $\mathrm{rank}(\lambda_0)=k$.
      By Lemma~\ref{Lem:Max_Indep}, for
     $\forall\, \lambda \in \mathrm{Col}_k(V^n)$,
      $\mathrm{hrk}(M(V^n,\lambda),\Z_2) \geq
      \mathrm{hrk}(M(V^n,\lambda_0),\Z_2)$. So it suffices to prove
      that
        \begin{equation} \label{Equ:Inequal}
          \mathrm{hrk}(M(V^n,\lambda_0),\Z_2) \geq 2^k.
        \end{equation}

       Indeed, the~\eqref{Equ:Inequal} follows from Theorem~\ref{thm:CaoLU} and
      Lemma~\ref{Lem:Max_Homeo_2} (see the Remark~\ref{rem:Same} below).
      But here we will give another proof of~\eqref{Equ:Inequal}
      which only uses the Lemma~\ref{Lem:Connect} taken from~\cite{Uto09}.
      Our proof will take advantage of the special symmetries of small covers
      (see~\eqref{Equ:Invol-Panel} and~\eqref{Equ:Commute}), and
      it is more natural from the viewpoint of the
      glue-back construction.
      \vskip .2cm

    Since $\lambda_0$ is maximally independent, by Lemma~\ref{Lem:Max_Homeo}, we can assume
   $\lambda_0(P_i)=e_i$, $1\leq i \leq k$,
   where $\{ e_1,\cdots, e_k \}$ is a linear basis of $(\Z_2)^k$.
   Let $\theta_{\lambda_0} : V^n \times (\Z_2)^k \rightarrow M(V^n,\lambda_0)$ be the
    quotient map defined by~\eqref{Glue_Back}.
    \vskip .2cm

   Now take an arbitrary panel of $V^n$, say $P_1$ and
   let $M_{\backslash P_1}(V^n,\lambda_0)$ be a partial glue-back
   from $(V^n,\lambda_0)$ defined by~\eqref{Equ:Partial_Glue-back}.
   Let
    $\theta^{\backslash P_1}_{\lambda_0} : V^n \times (\Z_2)^k \rightarrow
    M_{\backslash P_1}(V^n,\lambda_0) $ be the corresponding quotient map.
     Suppose $H$ is the subgroup of $(\Z_2)^k$ generated by
    $\{e_2,\cdots, e_k\}$. Then we define:
    \begin{equation} \label{Equ:Y}
     Y_1 =    \theta^{\backslash P_1}_{\lambda_0} (V^n \times H), \ \; Y_2 =
    \theta^{\backslash P_1}_{\lambda_0} (V^n \times (e_1+H));
    \end{equation}
    \begin{equation}
       A_1 =    \theta^{\backslash P_1}_{\lambda_0} (P_1 \times H), \ \; A_2 =
    \theta^{\backslash P_1}_{\lambda_0} (P_1\times (e_1+H)).
    \end{equation}
   \vskip .2cm

    Obviously, $A_1=\partial Y_1$, $A_2=\partial Y_2$ and
     there is homeomorphism $\Pi: Y_1 \rightarrow Y_2$ with
     $\Pi(A_1) = A_2$. Indeed, $\Pi$ is given by:
     $$ \Pi(\theta^{\backslash P_1}_{\lambda_0}(x,h)) =
      \theta^{\backslash P_1}_{\lambda_0}(x,h + e_1),\ \forall\, x\in V^n,\, \forall\, h\in H.$$

    It is easy to see that $M(V^n,\lambda_0)$ is the gluing of
    $Y_1$ and $Y_2$ along their boundary
    by a homeomorphism $\varphi : A_1 \rightarrow A_2$ defined by:
     \[ \varphi(\theta^{\backslash P_1}_{\lambda_0}(x_1,h)) = \theta^{\backslash P_1}_{\lambda_0}
       (\tau_1(x_1),h + e_1), \ \forall\, x_1\in P_1,\, \forall\, h\in H.  \]

      Moreover, since $\tau_1: P_1 \rightarrow P_1$
      extends to a homeomorphism $\widetilde{\tau}_1 : V^n \rightarrow V^n$
      (see~\eqref{Equ:Invol-Panel} and~\eqref{Equ:Commute}),
       we can extend $\varphi$ to a homeomorphism $\widetilde{\varphi}: Y_1 \rightarrow Y_2$ by:
       $$ \widetilde{\varphi}(\theta^{\backslash P_1}_{\lambda_0}(x,h)) =
      \theta^{\backslash P_1}_{\lambda_0}(\widetilde{\tau}_1(x),h + e_1),\ \forall\, x\in V^n,\, \forall\, h\in H.$$
       The $\widetilde{\varphi}$ is well-defined since $\widetilde{\tau}_1$
       commutes with each $\tau_i$ on $P_i$ (see~\eqref{Glue_Back} and~\eqref{Equ:Commute}).
        \vskip .2cm

     Now, if we identify $(Y_1,A_1)$ with $(Y_2, A_2)$ via $\Pi$,
    we get a decomposition of $M(V^n,\lambda_0)$ that satisfies all the conditions in
     Lemma~\ref{Lem:Connect}.
     So Lemma~\ref{Lem:Connect} implies:
       \begin{equation} \label{Equ:Estimate-1}
     \mathrm{hrk}(M(V^n,\lambda_0),\Z_2) \geq
     \mathrm{hrk}(A_1,\Z_2).
     \end{equation}

     In addition, let $ q : Y_1\cup Y_2 \rightarrow M(V^n,\lambda_0)$ be the quotient
       map.  It is easy to see that:
  $A_1 \cong q (A_1)= \theta_{\lambda_0}^{-1} (\Sigma_1)$. Since
  $\theta_{\lambda_0}^{-1} (\Sigma_1)$ is a principal
  $(\Z_2)^k$-bundle over $\Sigma_1$ and $\Sigma_1$ is a small cover over $F_1$ with dimension
  $n-1$, so
     by the induction hypothesis, we have
     $\mathrm{hrk}(\theta_{\lambda_0}^{-1} (\Sigma_1),\Z_2) \geq 2^k$. So
      $ \mathrm{hrk}(A_1,\Z_2) =
      \mathrm{hrk}(\theta_{\lambda_0}^{-1} (\Sigma_1),\Z_2) \geq
      2^k$.
     Then by~\eqref{Equ:Estimate-1}, we get
     $\mathrm{hrk}(M(V^n,\lambda_0),\Z_2) \geq 2^k$. So this case is confirmed.
      \vskip .2cm

     Next, we assume $m<k$.  Let
      $\iota : (\Z_2)^m \hookrightarrow (\Z_2)^k$ be the standard
      inclusion and define $\widehat{\lambda}:= \iota \circ
      \lambda$. We consider $\widehat{\lambda}$ as a $(\Z_2)^k$-coloring on $V^n$.
       So by the above argument, $\mathrm{hrk}(M(V^n,\widehat{\lambda}),\Z_2) \geq 2^k$.
      Since by Theorem~\ref{thm:comp}, $M(V^n,\widehat{\lambda})$
       consists of $2^{k-m}$ copies of $M(V^n,\lambda)$, so
       $\mathrm{hrk}(M(V^n,\lambda),\Z_2) \geq 2^m$.\vskip .2cm

       Finally, we assume $m>k$. Since $\mathrm{rank}(\lambda) \leq
       k$, with a proper change of basis, we can assume $L_{\lambda} \subset (\Z_2)^k \subset
       (\Z_2)^m$. Let $\varrho: (\Z_2)^m \rightarrow (\Z_2)^k$ be
       the standard projection. Define $\overline{\lambda}:= \varrho\circ \lambda$.
       Similarly, we consider $\overline{\lambda}$ as a $(\Z_2)^k$-coloring on
     $V^n$ and so we have $\mathrm{hrk}(M(V^n,\overline{\lambda}),\Z_2) \geq
     2^k$.  Since by Theorem~\ref{thm:comp}, $M(V^n,\lambda)$
       consists of $2^{m-k}$ copies of $M(V^n,\overline{\lambda})$, so
       $\mathrm{hrk}(M(V^n,\lambda),\Z_2) \geq 2^m$.
       \vskip .2cm
     So for $\forall\, m \geq 1 $ and $\forall\, \lambda \in
    \mathrm{Col}_m(V^n)$, we always have $\mathrm{hrk}(M(V^n,\lambda),\Z_2) \geq 2^m$.
    The induction is completed.  \hfill $\square$
    \vskip .2cm

       \begin{rem}\label{rem:Same}
        Notice that $M(V^n,\lambda_0)$ is a connected principal
      $(\Z_2)^k$-bundle over $Q^n$, so is the real moment-angle manifold
      $\mathcal{Z}_{P^n}$.
       Then by Lemma~\ref{Lem:Max_Homeo_2}, $M(V^n,\lambda_0)$ is
       homeomorphic to  $\mathcal{Z}_{P^n}$.
       Then the result of the Theorem~\ref{thm:CaoLU} also tells us
        that $\mathrm{hrk}(M(V^n,\lambda_0),\Z_2)\geq 2^k$.
       \end{rem}
  \vskip .2cm

         A crucial observation in the above proof is that: when
         $\lambda_0\in \mathrm{Col}_k(V^n)$ is
      maximally independent, we can always get the type of decomposition
      of $M(V^n,\lambda_0)$ as in Lemma~\ref{Lem:Connect}, which allows us to
       use the induction hypothesis. However, for an arbitrary $\lambda\in \mathrm{Col}_k(V^n)$,
       this type of decomposition for $M(V^n,\lambda)$ may not exist (at least not
       very obvious).\vskip .2cm

         For example,
        in the lower picture in Figure~\ref{p:Torus-Cover}, we have a
        principal $(\Z_2)^2$-bundle $\pi: M^2 \rightarrow T^2$ where $M^2$ is a disjoint union of
        two tori. The union of the two meridians in $M^2$ is the inverse image of
        a meridian in $T^2$ under $\pi$. If we cut $M^2$ open along these two meridians,
        we will get two circular cylinders.
         But $M^2$ here is not got by gluing these two cylinders
         together.
       This is because that the colors of the $(\Z_2)^2$-coloring on the two
        panels are not linearly independent. So the construction
        in~\eqref{Equ:Y} for this case fails to give us the type of
        decomposition of $M^2$ as in Lemma~\ref{Lem:Connect}.
        \vskip .2cm

         So when $\lambda\in \mathrm{Col}_k(V^n)$ is not maximally independent,
          we may not be able to
        directly apply the induction hypothesis to $M(V^n,\lambda)$ as we do
        to $M(V^n,\lambda_0)$ above. But these cases are settled by Lemma~\ref{Lem:Max_Indep}.

\section{Real Moment-Angle Manifolds from the Viewpoint of Glue-back construction}
  \label{Sec4}

   Suppose $P^n$ is an $n$-dimensional simple polytope with $k+n$
      facets $F_1,\cdots , F_{k+n}$ and $\pi_{\mu}: Q^n \rightarrow P^n$ is a small cover
       with a characteristic function $\mu$ on $P^n$. We know that
       $\mu : \{ F_1, \cdots, F_{k+n} \} \rightarrow (\Z_2)^n$
       satisfies: whenever $F_{i_1}\cap \cdots \cap F_{i_s} \neq \varnothing$,
        $\lambda(F_{i_1}), \cdots, \lambda(F_{i_s})$ are
        linearly independent vectors in $(\Z_2)^{n}$. For the
        convenience of our following discussion,
        let a linear basis of $(\Z_2)^n$ be $\{ e_{k+1},\cdots, e_{k+n}
        \}$.\vskip .2cm

       In this section, we will use the
       $\Z_2$-core $V^n$ of $Q^n$ as described at the beginning of the proof of
    Theorem~ \ref{thm:main}, whose involutive panel structure
    $\{ \tau_i: P_i \rightarrow P_i \}_{1\leq i \leq k}$ is defined by~\eqref{Equ:Invol-Panel}.
   \vskip .2cm

   It is well known that the real moment-angle manifold $\mathcal{Z}_{P^n}$
    is a principal $(\Z_2)^k$-bundle over
   $Q^n$. Since $\mathcal{Z}_{P^n}$ is connected, by Lemma~\ref{Lem:Max_Homeo_2},
    $\mathcal{Z}_{P^n}$ is homeomorphic to $M(V^n,\lambda_0)$
   for any maximally independent $\lambda_0 \in \mathrm{Col}_k(V^n)$.
   Now, let us compare the definitions of $\mathcal{Z}_{P^n}$ and
   $M(V^n,\lambda_0)$ and then construct a homeomorphism
   from $M(V^n,\lambda_0)$ to $\mathcal{Z}_{P^n}$ explicitly.
   \vskip .2cm

   Suppose $\{ e_1, \cdots, e_k \}$ is a linear basis of $(\Z_2)^k$.
    We choose $\lambda_0(P_{i}) = e_{i}$ for
    $1\leq i \leq k$. Let $\theta_{\lambda_0} : V^n \times (\Z_2)^k \rightarrow M(V^n,\lambda_0)$
    be the quotient map defined by~\eqref{Glue_Back}. By the definition,
   the panel $P_i$ consists of $2^n$ copies of $F_i$, and any $(x,g)\in P_i\times (\Z_2)^k$
    is identified with $(\tau_i(x), g+\lambda_0(P_{i})) = (\tau_i(x),
    g+e_i)$ under $\theta_{\lambda_0}$.\vskip .2cm

    Suppose $(\Z_2)^{k+n} = (\Z_2)^k \oplus (\Z_2)^n$ with a linear basis
    $\{ e_1, \cdots, e_k, e_{k+1}, \cdots, e_{k+n} \}$, and we identify
    $(\Z_2)^k$ and $(\Z_2)^n$ as a subgroup of $(\Z_2)^{k+n}$ in the obvious way. The
    real moment-angle manifold
    $\mathcal{Z}_{P^n}$ corresponds to a $(\Z_2)^{k+n}$-coloring $\mu_0$
   on $P^n$ which is $\mu_0(F_i) = e_i$ for any $1\leq i \leq k+n$. By the definition,
   $\mathcal{Z}_{P^n}$ is obtained by gluing $2^{k+n}$ copies of $P^n$ together
   by identifying any $(p,\widetilde{g}) \in F_i \times (\Z_2)^{k+n}$ with $(p,\widetilde{g}+\mu_0(F_i))$
   for all facet $F_i$. Let $\Theta: P^n \times (\Z_2)^{k+n} \rightarrow \mathcal{Z}_{P^n}$
   be the corresponding quotient map. \vskip .2cm

   To see the relationship between $\mathcal{Z}_{P^n}$ and
   $M(V^n,\lambda)$, let us decompose the above gluing process defined by
   $\Theta$ into two steps.
   In the first step, we glue the $2^{k+n}$ copies of
   $P^n$ only along the facets $F_{k+1}, \cdots, F_{k+n}$ on their
   boundaries. Then we will get $2^k$ copies of $V^n$, each of which is the gluing of
   $2^n$ copies of $P^n$.
    We readily index these $V^n$'s by the elements of $(\Z_2)^k =
     \langle e_1, \cdots, e_k \rangle \subset (\Z_2)^{k+n}$.
    Let $\widetilde{\zeta} : P^n \times (\Z_2)^{k+n} \rightarrow V^n \times (\Z_2)^k$
     denote this partial gluing map,
     and let $J = (\Z_2)^n = \langle e_{k+1}, \cdots, e_{k+n} \rangle \subset (\Z_2)^{k+n}$.
     Then we have:
     \begin{equation} \label{Equ:Vn-copy}
       \  V^n\times g = \widetilde{\zeta} (P^n \times (g+J)), \
        \forall\, g\in (\Z_2)^k.
      \end{equation}

    The $(\Z_2)^{k+n}$-coloring $\mu_0$ on $P^n$ induces a
   coloring $\widehat{\lambda}_0$ on $V^n$ valued in $(\Z_2)^k \subset (\Z_2)^{k+n}$ by:
    $\widehat{\lambda}_0(P_i) = \mu_0(F_i) = e_i,\ 1\leq i \leq k$.
     Note that the two $(\Z_2)^k$-coloring
      $\widehat{\lambda}_0$ and $\lambda_0$ on $V^n$ actually coincide, but they
      are used for different purposes.\vskip .2cm

     In the second step, the $\mathcal{Z}_{P^n}$ is obtained
   from gluing the $2^k$ copies of $V^n$ by identifying
   any $(x,g)\in P_i\times (\Z_2)^k$ with $(x, g+\widehat{\lambda}_0(P_{i})) = (x,
   g+e_i)$ for all $P_i$, $1\leq i \leq k$.
    Let $\vartheta_{\widehat{\lambda}_0} : V^n \times (\Z_2)^k \rightarrow
     \mathcal{Z}_{P^n}$ denote this quotient map. Obviously,
     $$\Theta = \vartheta_{\widehat{\lambda}_0} \circ \widetilde{\zeta} :
     P^n \times (\Z_2)^{k+n} \rightarrow \mathcal{Z}_{P^n}. $$

   Notice that the domains of $\theta_{\lambda_0}$ and $\vartheta_{\widehat{\lambda}_0}$
   are both $V^n\times (\Z_2)^k$.
    By comparing their definitions, we
   see that the difference between $\theta_{\lambda_0}$ and $\vartheta_{\widehat{\lambda}_0}$
   is just the involution
   $\tau_i$ on each panel $P_i$. Since each $\tau_i$ extends to an involution $\widetilde{\tau}_i$
   on $V^n$ (see~\eqref{Equ:Commute}), for
    any $g= \overset{k}{\underset{i=1}{\sum}}\, \varepsilon_i e_i \in
   (\Z_2)^k$ where $\varepsilon_i \in \{ 0, 1 \}$, we get an involution $\psi_g : V^n \rightarrow V^n$
   by:
    \begin{equation} \label{Equ:psi-g}
     \psi_g(x) := \widetilde{\tau}^{\varepsilon_k}_k \circ \cdots \circ
     \widetilde{\tau}^{\varepsilon_1}_1 (x),\ \forall\, x \in V^n.
     \end{equation}
    The $\psi_g$ is independent of the ordering of $ \widetilde{\tau}_1, \cdots,
    \widetilde{\tau}_k$ since they commute with each other.
     Using these involutions $\{ \psi_g \}_{g \in (\Z_2)^k}$, we can define a
     homeomorphism $\Psi : V^n \times (\Z_2)^k \rightarrow V^n \times (\Z_2)^k$ by:
    \[
     \Psi(x, g) := ( \psi_g(x), g ), \ \, \forall\, x\in V^n, \  \forall\, g\in
     (\Z_2)^k.
     \]
   Obviously, $\Psi\circ \Psi = id$. Moreover, we can show the following lemma.
   \vskip .2cm

   \begin{lem}
      $\theta_{\lambda_0}(x,g) =\theta_{\lambda_0}(x',g')$
   if and only if $\vartheta_{\widehat{\lambda}_0}(\Psi(x,g)) =
   \vartheta_{\widehat{\lambda}_0}(\Psi(x',g'))$.
   \end{lem}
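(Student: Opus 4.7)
My plan is to show that the involution $\Psi$ conjugates the equivalence relation defining $\theta_{\lambda_0}$ to the one defining $\vartheta_{\widehat{\lambda}_0}$. Since $\Psi^2 = \mathrm{id}$, it suffices to prove that $\Psi$ carries each generator of one relation to a pair equivalent under the other, and symmetrically; the iff then follows. The two relations on $V^n \times (\Z_2)^k$ are generated by very simple panel identifications, namely $(x,g) \sim (\tau_i(x), g+e_i)$ for $x \in P_i$ on the $\theta_{\lambda_0}$ side, and $(y,h) \sim (y, h+e_i)$ for $y \in P_i$ on the $\vartheta_{\widehat{\lambda}_0}$ side. By the usual description of the equivalence relation generated by such moves, an induction on the length of a generating chain reduces the lemma to checking one generator at a time.

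The core of the argument is the following identity for the family $\{\psi_g\}_{g \in (\Z_2)^k}$ defined in~\eqref{Equ:psi-g}: for every $g \in (\Z_2)^k$ and every $1 \leq i \leq k$,
\[
  \psi_{g+e_i} \;=\; \widetilde{\tau}_i \circ \psi_g \;=\; \psi_g \circ \widetilde{\tau}_i.
\]
This is immediate from the definition of $\psi_g$ as a product of commuting involutions together with $\widetilde{\tau}_i^2 = \mathrm{id}$, whether the $i$th coordinate of $g$ is $0$ or $1$. Combined with the observation (recorded just after~\eqref{Equ:Commute}) that each $\widetilde{\tau}_j$ preserves every panel of $V^n$, it follows that $\psi_g(P_i) = P_i$ for all $i$ and $g$. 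Granting these two facts, for any $x \in P_i$ one computes
\[
  \Psi(\tau_i(x), g+e_i) \;=\; (\psi_{g+e_i}(\widetilde{\tau}_i(x)),\, g+e_i) \;=\; (\psi_g(x),\, g+e_i),
\]
and since $\psi_g(x) \in P_i$, this element is $\vartheta_{\widehat{\lambda}_0}$-equivalent to $\Psi(x,g) = (\psi_g(x), g)$. This settles one direction on generators; reading the same computation backwards translates a $\vartheta_{\widehat{\lambda}_0}$-generator into a $\theta_{\lambda_0}$-equivalent pair and handles the other direction.

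The main bookkeeping issue I anticipate is that points in the relative interior of a multi-panel intersection $P_{i_1} \cap \cdots \cap P_{i_s}$ admit compound identifications of the form $(x,g) \sim (\tau_{i_s}^{\varepsilon_s} \circ \cdots \circ \tau_{i_1}^{\varepsilon_1}(x),\, g + \sum_j \varepsilon_j e_{i_j})$, and one must verify that the $\Psi$-correspondence is consistent with all of them simultaneously. However, because each $\widetilde{\tau}_i$ preserves every panel and the $\widetilde{\tau}_i$ commute pairwise, the one-step verification composes cleanly along any such chain, so no genuinely new difficulty arises beyond the single identity above.
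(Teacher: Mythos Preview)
Your proof is correct and follows essentially the same approach as the paper's: both reduce to checking a single generating identification and rely on the identity $\psi_{g+e_i} = \widetilde{\tau}_i \circ \psi_g$ together with $\widetilde{\tau}_i^2 = \mathrm{id}$. If anything, your write-up is slightly more careful, since you make explicit that $\psi_g$ preserves each panel $P_i$ (needed to justify $\vartheta_{\widehat{\lambda}_0}(\psi_g(x), g+e_i) = \vartheta_{\widehat{\lambda}_0}(\psi_g(x), g)$), a point the paper uses silently.
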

      \begin{proof}
       If $\theta_{\lambda_0}(x,g) =\theta_{\lambda_0}(x',g')$,
       without loss of generality, we can assume that
        $$ \text{ $x\in P_i$ and $(x',g') = (\tau_i(x),g+\lambda_0(P_i)) = (\tau_i(x), g + e_i)$ for some $i$} .$$
       Then $\Psi(x',g') = (\psi_{g+e_i}(\tau_i(x)),g+e_i) =
        (\psi_g \circ \widetilde{\tau}_i (\tau_i(x)), g+e_i)=
         (\psi_g(x), g+e_i) $. So $\vartheta_{\widehat{\lambda}_0}(\Psi(x',g'))=
         \vartheta_{\widehat{\lambda}_0}(\psi_g(x), g+e_i)=
         \vartheta_{\widehat{\lambda}_0}(\psi_g(x), g)=
         \vartheta_{\widehat{\lambda}_0}(\Psi(x,g))$. \vskip .2cm

           Conversely, if $\vartheta_{\widehat{\lambda}_0}(\Psi(x,g)) =
   \vartheta_{\widehat{\lambda}_0}(\Psi(x',g'))$, without loss of
   generality, we can assume that $x\in P_i$ and $\Psi(x',g') = (\psi_{g'}(x'), g') = (\psi_g(x), g +
   e_i)$ for some $i$. Then we have $\psi_{g'}(x') = \psi_{g+e_i} (x') = \psi_g (\widetilde{\tau}_i (x')) =
   \psi_g(x)$. So $\widetilde{\tau}_i (x')= x $ and so $x'= \widetilde{\tau}_i(x) =
   \tau_i(x)$. Therefore, $\theta_{\lambda_0}(x',g')=
   \theta_{\lambda_0}(\tau_i(x),g+e_i)= \theta_{\lambda_0}(x,g)$.
    \end{proof}

   By the above lemma, $\Psi$ induces a homeomorphism $\widetilde{\Psi} : M(V^n,\lambda_0) \rightarrow
   \mathcal{Z}_{P^n}$ by:
     \begin{equation} \label{Equ:Glue_back-Cano_Action}
       \widetilde{\Psi}(\theta_{\lambda_0}(x,g)) :=
   \vartheta_{\widehat{\lambda}_0}(\Psi(x,g)), \ \, \forall\, x\in V^n, \  \forall\, g\in
     (\Z_2)^k.
     \end{equation}
   It is easy to see that $\widetilde{\Psi}^{-1}(\vartheta_{\widehat{\lambda}_0}(x,g)) =
   \theta_{\lambda_0}(\Psi(x,g)) =  \theta_{\lambda_0}(\psi_g(x), g
   )$.\vskip .2cm

   Next, let us see how $\widetilde{\Psi}$ relates the
   natural $(\Z_2)^k$-action on
   $ M(V^n,\lambda_0)$ and the canonical $(\Z_2)^{k+n}$-action on
   $\mathcal{Z}_{P^n}$. The natural action of $(\Z_2)^k$ on
   $ M(V^n,\lambda_0)$ defined by~\eqref{Equ:FreeAction} is
   \begin{equation} \label{Equ:FreeAction-3}
    g' \cdot \theta_{\lambda_0}(x,g) = \theta_{\lambda_0}(x,g'+g),
   \ \, \forall\, x\in V^n, \  \forall\, g', g\in (\Z_2)^k.
   \end{equation}
   This induces a free action of $(\Z_2)^k$ on $\mathcal{Z}_{P^n}$ through the homeomorphism
   $\widetilde{\Psi}$ by:
    \begin{equation} \label{Equ:FreeAction-4}
     g' \star \vartheta_{\widehat{\lambda}_0}(x,g) =
     \vartheta_{\widehat{\lambda}_0}(\psi_{g'}(x),g'+g),
  \ \, \forall\, x\in V^n,  \  \forall\, g', g\in (\Z_2)^k,
    \end{equation}
   so that $\widetilde{\Psi}$ is equivariant with respect to the
   $(\Z_2)^k$-actions defined by~\eqref{Equ:FreeAction-3}
   and~\eqref{Equ:FreeAction-4}:
    \begin{equation} \label{Equ:Equivar-Action-1}
     \widetilde{\Psi}(g'\cdot \theta_{\lambda_0}(x,g)) =
    g'\star \widetilde{\Psi}(\theta_{\lambda_0}(x,g)), \quad \forall\, g'\in
   (\Z_2)^k.
   \end{equation}

   On the other hand, the canonical $(\Z_2)^{k+n}$-action on
   $\mathcal{Z}_{P^n}$ is defined by:
   \begin{equation} \label{Equ:Cano-Action}
      \widetilde{g}_0 \circledast \Theta (p,\widetilde{g}) =  \Theta (p,  \widetilde{g}_0 +
      \widetilde{g}),\ \, \forall \, p\in P^n,\ \forall\,  \widetilde{g}_0 , \widetilde{g}
      \in  (\Z_2)^{k+n}.
   \end{equation}
   We can first interpret the free $(\Z_2)^k$-action $\star$ on $\mathcal{Z}_{P^n}$
   defined by~\eqref{Equ:FreeAction-4}
    as the restriction of the canonical $(\Z_2)^{k+n}$-action $\circledast$
    on $\mathcal{Z}_{P^n}$
    to a subtorus of $(\Z_2)^{k+n}$. In fact, by the
    definition of $\widetilde{\tau}_i$ in~\eqref{Equ:Commute}
    and~\eqref{Equ:Vn-copy}, if $g'= \overset{k}{\underset{i=1}{\sum}}\, \varepsilon'_i e_i \in
   (\Z_2)^k$ where $\varepsilon'_i \in \{ 0, 1 \}$ for $1\leq i \leq k$,
    the action of $g'$ on $\mathcal{Z}_{P^n}$ defined by~\eqref{Equ:FreeAction-4} is:
   \[
        g' \star \vartheta_{\widehat{\lambda}_0} \left(\widetilde{\zeta} \left(p,
        \widetilde{g} \right) \right)  =  \vartheta_{\widehat{\lambda}_0}
        \left(\widetilde{\zeta} \left(p,
         \widetilde{g} + g' + \sum^k_{i=1} \varepsilon'_i \mu(F_i) \right) \right),
         \ \, \forall\, (p,\widetilde{g})\in P^n\times (\Z_2)^{k+n}
     \]
  Since $ \vartheta_{\widehat{\lambda}_0} \circ \widetilde{\zeta} = \Theta$,
  so it is equivalent to write:
  \begin{equation} \label{Equ:Equivar-Action-2}
    \left( \sum^k_{i=1} \varepsilon'_i e_i \right)
     \star \Theta\left(p,\widetilde{g} \right)  =  \Theta \left(p,
         \widetilde{g} +  \sum^k_{i=1} \varepsilon'_i \left(e_i+\mu(F_i) \right) \right).
   \end{equation}

     Let $H_{\mu}$ be the subgroup of $(\Z_2)^{k+n}$ spanned by
     $\{ e_1 + \mu(F_1), \cdots, e_k + \mu(F_k) \}$. Since
     $\mu$ takes value in $(\Z_2)^n=\langle e_{k+1}, \cdots , e_{k+n}
     \rangle$, the rank of $H_{\mu}$ is equal to $k$.
     Let $\sigma: (\Z_2)^k \rightarrow H_{\mu}$ be a group
     isomorphism defined by
      \begin{equation} \label{Equ:Group-Isom}
        \sigma(e_i) = e_i + \mu(F_i), \quad  i=1,\cdots, k.
        \end{equation}
         Then according to~\eqref{Equ:Equivar-Action-1} ---~\eqref{Equ:Equivar-Action-2},
          we have:
   \begin{equation} \label{Equ:Equivar-Action-2-2}
     g'\star \Theta\left(p,\widetilde{g} \right) =
   \sigma(g') \circledast \Theta\left(p,\widetilde{g} \right),
    \ \, \forall\, (p,\widetilde{g})\in P^n\times (\Z_2)^{k+n}
   \end{equation}
     This implies that the free $(\Z_2)^k$-action on $\mathcal{Z}_{P^n}$ defined by~\eqref{Equ:FreeAction-4}
     is equivalent to the restriction of the canonical $(\Z_2)^{k+n}$-action
     on $\mathcal{Z}_{P^n}$ to $H_{\mu}$. By combining the~\eqref{Equ:Equivar-Action-1}
   and~\eqref{Equ:Equivar-Action-2-2}, we get:
   \begin{equation} \label{Equ:Equivar-Action-3}
        \widetilde{\Psi}(g'\cdot \theta_{\lambda_0}(x,g) ) =
   \sigma(g') \circledast \widetilde{\Psi}(\theta_{\lambda_0}(x,g)), \
   \; \forall\, \theta_{\lambda_0}(x,g) \in M(V^n,\lambda_0)
    \end{equation}

    So we have proved the following proposition.

  \begin{prop}
   The natural free $(\Z_2)^k$-action on $M(V^n,\lambda_0)$
     is equivalent to the restriction of the canonical $(\Z_2)^{k+n}$-action
     on $\mathcal{Z}_{P^n}$ to $H_{\mu}$.
  \end{prop}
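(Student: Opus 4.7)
The plan is to exploit the explicit homeomorphism $\widetilde{\Psi} : M(V^n,\lambda_0) \to \mathcal{Z}_{P^n}$ constructed just above by transporting the natural $(\Z_2)^k$-action on the left-hand side to an action on $\mathcal{Z}_{P^n}$, and then to identify this transported action with the restriction of the canonical $(\Z_2)^{k+n}$-action to the subgroup $H_{\mu}$ via the isomorphism $\sigma$. Since the two actions to be compared live on the same space $\mathcal{Z}_{P^n}$, and the target of comparison is an equivalence of free principal $(\Z_2)^k$-bundles over $Q^n$, it suffices to exhibit a group isomorphism $\sigma : (\Z_2)^k \to H_{\mu}$ together with a bundle homeomorphism $f$ inducing the identity on $Q^n$ such that $f(g' \cdot x) = \sigma(g') \circledast f(x)$ for all $g' \in (\Z_2)^k$ and $x \in M(V^n,\lambda_0)$.

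First I would take $f := \widetilde{\Psi}$. Pushing the natural action forward through $\widetilde{\Psi}$ gives the $\star$-action on $\mathcal{Z}_{P^n}$, and by construction $\widetilde{\Psi}$ is equivariant with respect to $\cdot$ and $\star$. Next I would compute the $\star$-action on a point of the form $\Theta(p,\widetilde{g})$ by unwinding the definition of $\widetilde{\Psi}$ through the factorization $\Theta = \vartheta_{\widehat{\lambda}_0} \circ \widetilde{\zeta}$ and the definition of the involutions $\widetilde{\tau}_i$ via $\mu$. The crucial calculation is that applying $\psi_{g'}$ on the $V^n$-factor before re-indexing by $g' + g$ has the effect, after pulling back through $\widetilde{\zeta}$, of shifting the $(\Z_2)^{k+n}$-coordinate by $g' + \sum_i \varepsilon'_i \mu(F_i)$, that is, by $\sigma(g')$.

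Having established that identity, I would then define $\sigma : (\Z_2)^k \to H_{\mu}$ by $\sigma(e_i) = e_i + \mu(F_i)$ and check that $\sigma$ is a well-defined group isomorphism onto $H_{\mu}$: it is clearly a homomorphism, and injectivity follows because the first $k$ coordinates of $\sigma(g')$ in the basis $\{e_1,\dots,e_{k+n}\}$ reproduce $g'$ itself, while surjectivity onto $H_{\mu}$ is immediate from the definition of $H_{\mu}$ as the span of $\{e_i + \mu(F_i)\}_{1 \le i \le k}$. Combining this with the equivariance of $\widetilde{\Psi}$ relative to $\cdot$ and $\star$ yields
\[
\widetilde{\Psi}(g' \cdot \theta_{\lambda_0}(x,g)) = \sigma(g') \circledast \widetilde{\Psi}(\theta_{\lambda_0}(x,g)),
\]
and since $\widetilde{\Psi}$ descends to the identity map on the common orbit space $Q^n$, this establishes the equivalence of principal $(\Z_2)^k$-bundles in the sense of the definition given in the introduction.

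The one place requiring real care is the translation of the $\star$-formula \eqref{Equ:FreeAction-4} into the $\Theta$-coordinates, because the involutions $\widetilde{\tau}_i$ act in the first factor of $V^n \times (\Z_2)^k$ while $\sigma(g')$ acts on $(\Z_2)^{k+n}$; so the main obstacle is keeping the bookkeeping between $\widetilde{\zeta}$, $\vartheta_{\widehat{\lambda}_0}$, and $\Theta$ straight. Fortunately the commuting diagram $\Theta = \vartheta_{\widehat{\lambda}_0} \circ \widetilde{\zeta}$ together with the fact that $\widetilde{\tau}_i$ corresponds under $\widetilde{\zeta}$ exactly to translation by $\mu(F_i)$ in the $(\Z_2)^{k+n}$-factor reduces this bookkeeping to the identity $\varepsilon'_i(e_i + \mu(F_i)) = \varepsilon'_i e_i + \varepsilon'_i \mu(F_i)$, after which the proposition follows.
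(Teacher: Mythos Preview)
Your proposal is correct and follows essentially the same approach as the paper. The paper's argument is exactly the chain you outline: transport the natural action via $\widetilde{\Psi}$ to obtain the $\star$-action, unwind $\star$ in $\Theta$-coordinates using $\Theta=\vartheta_{\widehat{\lambda}_0}\circ\widetilde{\zeta}$ together with the fact that $\widetilde{\tau}_i$ corresponds to translation by $\mu(F_i)$, observe that this yields $\sigma(g')\circledast$, and conclude the equivariance identity $\widetilde{\Psi}(g'\cdot\theta_{\lambda_0}(x,g))=\sigma(g')\circledast\widetilde{\Psi}(\theta_{\lambda_0}(x,g))$; indeed the paper states the proposition as a summary of precisely this preceding computation.
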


  \begin{cor} \label{Cor:Equivalence}
    For any subgroup $N \subset (\Z_2)^k$,
     the natural $(\Z_2)^k\slash N$-action on $M(V^n,\lambda_0)\slash
     N$ is equivalent to the canonical $H_{\mu} \slash \sigma(N)$-action
     on $\mathcal{Z}_{P^n} \slash \sigma(N)$.
     \end{cor}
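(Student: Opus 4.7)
\textbf{Proof proposal for Corollary~\ref{Cor:Equivalence}.} My plan is to deduce this as a direct quotient version of the preceding proposition. From the proposition, the map $\widetilde{\Psi}$ of~\eqref{Equ:Glue_back-Cano_Action} is a homeomorphism $M(V^n,\lambda_0)\to\mathcal{Z}_{P^n}$, and~\eqref{Equ:Equivar-Action-3} together with the isomorphism $\sigma:(\Z_2)^k\to H_{\mu}$ of~\eqref{Equ:Group-Isom} tells us
\[
  \widetilde{\Psi}(g'\cdot x) \;=\; \sigma(g')\circledast\widetilde{\Psi}(x),\qquad \forall\, g'\in(\Z_2)^k,\ x\in M(V^n,\lambda_0).
\]
Consequently, for any subgroup $N\subset(\Z_2)^k$, the homeomorphism $\widetilde{\Psi}$ carries each $N$-orbit bijectively onto the corresponding $\sigma(N)$-orbit in $\mathcal{Z}_{P^n}$. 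Since $\sigma$ is an isomorphism, $\sigma(N)$ is a subgroup of $H_{\mu}$, and because $H_{\mu}$ acts freely on $\mathcal{Z}_{P^n}$ (by the proposition, it is conjugate to the free natural $(\Z_2)^k$-action), so does $\sigma(N)$. Thus the quotient $\mathcal{Z}_{P^n}\slash\sigma(N)$ satisfies the hypothesis required for the residual $H_{\mu}\slash\sigma(N)$-action to be termed \emph{canonical} in the sense defined at the start of the paper.

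I would then observe that $\widetilde{\Psi}$ descends to a well-defined homeomorphism
\[
  \bar{\Psi}:\ M(V^n,\lambda_0)\slash N\ \longrightarrow\ \mathcal{Z}_{P^n}\slash\sigma(N),
\]
and that $\sigma$ descends to a group isomorphism $\bar{\sigma}:(\Z_2)^k\slash N\to H_{\mu}\slash\sigma(N)$. The displayed intertwining identity above then passes verbatim to the quotient, giving
\[
  \bar{\Psi}(\bar{g}\cdot\bar{y}) \;=\; \bar{\sigma}(\bar{g})\circledast\bar{\Psi}(\bar{y}),\qquad \forall\,\bar{g}\in(\Z_2)^k\slash N,\ \bar{y}\in M(V^n,\lambda_0)\slash N,
\]
so $(\bar{\Psi},\bar{\sigma})$ is an equivariant homeomorphism between the two quotient actions.

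Finally, I would verify that $\bar{\Psi}$ covers the identity on the common orbit space. Both the natural $(\Z_2)^k$-action on $M(V^n,\lambda_0)$ and the $H_{\mu}$-action on $\mathcal{Z}_{P^n}$ have orbit space $Q^n$, and $\widetilde{\Psi}$ induces the identity on $Q^n$ (this is built into the construction of $\widetilde{\Psi}$ in Section~\ref{Sec4}); passing to the further quotients by $N$ and $\sigma(N)$ leaves the orbit space $Q^n$ unchanged, and the induced map on $Q^n$ is still the identity. By the definition of equivalence, the two actions in the corollary are therefore equivalent. I do not foresee any serious obstacle; the only point worth flagging is the mild extension of the definition of "equivalence" to allow actions of two different (though canonically isomorphic via $\bar{\sigma}$) groups, which is implicit in the way the proposition was stated.
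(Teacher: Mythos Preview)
Your proposal is correct and matches the paper's approach: the paper states the corollary with no proof, treating it as an immediate consequence of the preceding proposition via the equivariant homeomorphism $(\widetilde{\Psi},\sigma)$ passing to quotients, which is exactly what you spell out. Your added verification that $\widetilde{\Psi}$ (and hence $\bar{\Psi}$) induces the identity on $Q^n$ is implicit in the construction (since each $\psi_g$ fixes the $P^n$-coordinate by~\eqref{Equ:Commute} and~\eqref{Equ:psi-g}) and is a reasonable point to make explicit.
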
\vskip .2cm

     In addition, it is easy to see that
     the intersection of $H_{\mu}$
       with the isotropy subgroup of any orbit of $\mathcal{Z}_{P^n}$
      under the canonical $(\Z_2)^{k+n}$-action is trivial. So the
      canonical action of $H_{\mu}$ on $\mathcal{Z}_{P^n}$ is indeed free.

       \vskip .2cm

      \begin{rem}
      The equivalence $\widetilde{\Psi}$
      identifies the orbit space $ M(V^n,\lambda_0) \slash (\Z_2)^k \cong Q^n$
      with
      the partial quotient $\mathcal{Z}_{P^n} \slash H_{\mu}$
      (see~\eqref{Equ:Equivar-Action-3}). Notice that if we choose
      another vertex $v'_0$ of $P^n$ and let $F'_1,\cdots, F'_k$ be the
      facets of $P^n$ that are not incident to $v'_0$, we will get
      another subtorus $H'_{\mu}$ of $(\Z_2)^{k+n}$ with rank $k$ by
      the above arguments
      so that $Q^n \cong \mathcal{Z}_{P^n} \slash H'_{\mu}$ too. So
      the subtorus $H \subset (\Z_2)^{k+n}$ that satisfies $\mathcal{Z}_{P^n} \slash H \cong Q^n$
      is not unique.
      \end{rem}
  \ \\
      \textbf{Proof of Proposition~\ref{prop:Partial-Quotient}:}
        Suppose $P^n$ has $k+n$ facets $F_1,\cdots, F_{k+n}$
        and $\pi_{\mu} : Q^n \rightarrow
        P^n$ is a small cover with a characteristic function $\mu$ on
        $P^n$. Let $V^n$ be a $\Z_2$-core of $Q^n$ with panels $\{ P_1, \cdots, P_k \}$
          described above.
        By Theorem~\ref{Thm:Equiv-Isom}, for any
         principal $(\Z_2)^m$-bundle $M^n$ over $Q^n$, there exists
        a $\lambda \in \mathrm{Col}_m(V^n)$ so that $M^n$ is equivalent to
        $M(V^n,\lambda)$ as principal $(\Z_2)^m$-bundles over $Q^n$.
        In addition, $M^n$ is connected implies that
        $L_{\lambda} = (\Z_2)^m$ (see Theorem~\ref{thm:comp}),
        and so $m\leq k$. Without loss of generality, suppose
        $\{ \lambda(P_1),\cdots, \lambda(P_m) \}$ is a linear basis of
        $L_{\lambda}$. In addition, we consider $(\Z_2)^m$ as a direct summand of
         $(\Z_2)^k$ and choose $\omega_1, \cdots, \omega_{k-m} \in (\Z_2)^k$ so
         that $(\Z_2)^k = L_{\lambda} \oplus \langle\omega_1 \rangle \oplus \cdots
        \oplus \langle\omega_{k-m}\rangle$.
       Let $\{ e_1, \cdots, e_k \}$ be a linear basis of $(\Z_2)^k$
       defined by the following:
       \begin{equation} \label{Equ:Basis}
        e_i = \lambda(P_i), \ 1\leq i \leq m ; \quad e_{m+j}
        = \omega_j, \ 1\leq j \leq k-m.
        \end{equation}

         As the above discussion, let $\lambda_0$ be a maximally independent
       $(\Z_2)^k$-coloring of $V^n$ defined by $\lambda_0(P_{i}) = e_{i}$ for
       $1\leq i \leq k$. And we let:
        $$N_{\lambda} := \langle e_{m+1},\cdots, e_{k} \rangle
        = \langle\omega_1 \rangle \oplus \cdots \oplus \langle\omega_{k-m}\rangle \subset
        (\Z_2)^k.$$
       Then we define an action $\widetilde{\eta}$ of $N_{\lambda}$ on $M(V^n,\lambda_0)$
         by: for any $1\leq j \leq k-m$,
          $$ \widetilde{\eta}(e_{m+j}) \left( \theta_{\lambda_0}(x,g) \right) :=
          (\lambda(P_{m+j})+\omega_{j}) \cdot
          \theta_{\lambda_0}(x,g) \overset{~\eqref{Equ:FreeAction-3}\ }{=}
          \theta_{\lambda_0}(x, g + \lambda(P_{m+j})+
         \omega_{j}).
         $$
        Obviously, the $N_{\lambda}$-action on $M(V^n,\lambda_0)$ defined by $\widetilde{\eta}$ is free.
        And
        by a parallel argument as in the proof of
        Lemma~\ref{Lem:Max_Indep}, we can show that the orbit space
        of this $N_{\lambda}$-action on $M(V^n,\lambda_0)$ is
        homeomorphic to $M(V^n,\lambda)$. Moreover, let:
         $$N^*_{\lambda} := \langle
          \lambda(P_{m+1})+\omega_1, \cdots, \lambda(P_k)+\omega_{k-m}
          \rangle \subset (\Z_2)^{k} .$$
        The rank of $N^*_{\lambda}$ is $k-m$. Obviously,
        the $N_{\lambda}$-action on $M(V^n,\lambda_0)$ defined by $\widetilde{\eta}$ is equivalent
        to the restriction of the natural $(\Z_2)^k$-action on $M(V^n,\lambda_0)$
        to $N^*_{\lambda}$. Then
        $M(V^n,\lambda)$ is homeomorphic to
        $M(V^n,\lambda_0) \slash N^*_{\lambda}$.
        Moreover, we can check that the natural action of $(\Z_2)^k\slash N^*_{\lambda}$ on
        $M(V^n,\lambda_0)\slash N^*_{\lambda}$
        is equivalent to the natural $(\Z_2)^{m}$-action on
        $M(V^n,\lambda)$. So $M^n \cong M(V^n,\lambda)$ is equivalent to
        $M(V^n,\lambda_0) \slash N^*_{\lambda}$ as principal
        $(\Z_2)^m$-bundles over $Q^n$.\vskip .2cm

        On the other hand, Corollary~\ref{Cor:Equivalence} says that
       the natural action of $(\Z_2)^k\slash N^*_{\lambda}$ on
        $ M(V^n,\lambda_0) \slash N^*_{\lambda}$ is equivalent to
        the canonical action of $H_{\mu} \slash \sigma(N^*_{\lambda})$ on
        $\mathcal{Z}_{P^n} \slash\sigma(N^*_{\lambda})$.
        Then combining all these equivalences, we have shown that $M^n$ is
        equivalent to the partial quotient $\mathcal{Z}_{P^n} \slash \sigma(N^*_{\lambda})$
        with the canonical $H_{\mu} \slash \sigma(N^*_{\lambda})$-action
        as principal $(\Z_2)^m$-bundles over $Q^n$.
        By the definition of $\{ e_1,\cdots, e_k \}$ in~\eqref{Equ:Basis} and the definition of
        $\sigma$ in~\eqref{Equ:Group-Isom}, $\sigma(e_{m+j}) = \sigma(\omega_j) = \omega_j +
        \mu(F_{m+j})$ for any $1\leq j \leq k-m$. So
        $\sigma(N^*_{\lambda})\subset H_{\mu} \subset (\Z_2)^{k+n}$ is generated
        by the set:
         $$\{  \sigma(\lambda(P_{m+1}))+\omega_1 + \mu(F_{m+1}),
        \cdots, \sigma(\lambda(P_k)) +\omega_{k-m} + \mu(F_k) \}.$$
        Notice that the choice for each $\omega_i$ is not unique, so
        the subtorus $H'$ of $(\Z_2)^{k+n}$ that satisfies
        $ \mathcal{Z}_{P^n} \slash H' \cong M^n $ is not unique either.
        \hfill $\square$

   \ \\

\end{document}